\newcommand{\ZZ}{\mathbb{Z}}
\newcommand{\RR}{\mathbb{R}}
\numberwithin{equation}{section}
\theoremstyle{plain}
\newtheorem{theorem}[equation]{Theorem}
\newtheorem{lemma}[equation]{Lemma}
\newtheorem{proposition}[equation]{Proposition}
\newtheorem{corollary}[equation]{Corollary}
\theoremstyle{definition}
\newtheorem{definition}[equation]{Definition}
\newtheorem{example}[equation]{Example}
\theoremstyle{remark}
\newtheorem*{remark}{Remark}
\newtheorem*{example*}{Example}
\title{Symmetric and $r$-Symmetric Tropical Polynomials and Rational Functions}
\author{Gunnar Carlsson}
\address{Mathematics Department,
         Stanford University University,
         Building 380, Sloan Hall,
         Stanford, California, 94305,
         USA}
\email{Gunnar@math.stanford.edu}
\author{Sara Kali\v{s}nik Verov\v{s}ek}
\address{Mathematics Department,
         Stanford University University,
         Building 380, Sloan Hall,
         Stanford, California, 94305,
         USA}
\email{Kalisnik@math.stanford.edu}
\begin{document}
 \maketitle
\begin{abstract}
A tropical polynomial in $nr$ variables, divided into blocks of $r$ variables each, is $r$-symmetric if it is invariant under the action of $S_n$ that permutes the blocks. For $r=1$ we call these symmetric tropical polynomials. We can define $r$-symmetric and symmetric tropical rational functions in a similar manner. In this paper we identify generators for the sets of symmetric tropical polynomials and rational functions. While $r$-symmetric tropical polynomials are not finitely generated for $r \geq 2$, we show that $r$-symmetric tropical rational functions are and provide a list of generators.
\end{abstract}
{\bf Keywords:} Tropical Algebra, Tropical Polynomials, Max-Min Algebra, Persistence Barcode

\section{Introduction}

Tropical geometry has been developed over the last two decades  to understand a wide variety of problems. 
  Tropical polynomial problems are interpretable as linear problems, with inequalities. This makes the methods useful in various kinds of optimization problems.  Tropical geometry can also be used to approximate, in an appropriate sense, ordinary algebraic geometric problems, and therefore is useful in their solution.    In addition, it permits the solution to a number of enumerative geometric problems~\cite{Mikhalkin}.  Although much work has been done, it is clear that there is not yet a complete translation of the methods of algebraic geometry to the tropical situation.  One of the main objects of study in algebraic geometry, invariant theory, has to our knowledge not been studied at all in the tropical setting.  In this paper, we initiate the translation of invariant theory by studying some special cases.  We find that although some ideas from ordinary invariant theory translate, there are interesting differences.  

We call a tropical polynomial in $n$ variables symmetric if it is invariant under the action of $S_n$ that permutes the variables. The first result of the paper (Theorem 3.7) states that every tropical symmetric polynomial can be written as a tropical polynomial in the elementary symmetric tropical polynomials $e_1, \ldots, e_n$ and $e_n^{-1}$. In Corollary 3.11 we show that this expression is unique if the representation of the polynomial is `minimal' (essentially every monomial involved contributes to the function). This is generalized to rational functions in Theorem 4.2.

In the second part of the paper we consider the case when the tropical polynomial semiring has $nr$ variables that come in $n$ blocks of $r$ variables each and are permuted by the symmetric group $S_n$. We call a tropical polynomial in $nr$ variables $r$-symmetric if it is invariant under the action of $S_n$ that permutes the blocks.
We define elementary $r$-symmetric polynomials and show that they separate orbits. As opposed to the ordinary polynomial case, the semiring of $r$-symmetric tropical polynomials is not finitely generated, but the elementary $r$-symmetric polynomials do generate $r$-symmetric rational functions.

Our motivation for the study of this problem came out of the study of persistence barcodes~\cite{topodata},\cite{pattern},\cite{ZC}.  The interpretation of the space of persistence barcodes as embedded in the geometric points of an affine scheme over $\RR$ led us to the conclusion that some important functions were not included, notably max and min functions, which suggested to us that it would be valuable to carry out a parallel analysis where tropical functions on the barcodes are studied.

\section{Tropical Polynomials and Tropical Rational Functions}\label{basics}
Tropical algebra is based on the study of the tropical semiring $(\RR \cup \{\infty \}, \oplus, \odot)$. In this semiring,  addition and multiplication are defined as follows:
\[
\begin{array}{ccc}
a\oplus b := \min{(a, b)} &\, \textrm{and} \,& a\odot b := a+b.
\end{array}
\]
Both are commutative and associative. The times operator $\odot$ takes precedence when plus $\oplus$ and times $\odot$ occur in the same expression.
The distributive law holds:
\[
a \odot (b\oplus c) = a\odot b \oplus a\odot c.
\]
Moreover, the Frobenius identity (Freshman's Dream) holds for all powers $n$ in tropical arithmetic:
\begin{equation}\label{freshman}
(a \oplus b)^n = a^n \oplus b^n.
\end{equation}
Expression $b^{-1}$ is the inverse of $b$ with respect to $\odot$ and equals $-b$ in ordinary arithmetic.

Let $x_1, x_2, \ldots, x_n$ be variables representing elements in the tropical semiring. A \emph{tropical monomial expression} is any product or quotient of these variables, where repetition is allowed. By commutativity, we can sort the product and write monomial expressions with the variables raised to exponents.

\begin{definition}
A \emph{tropical polynomial expression} is a finite linear combination of tropical monomial expressions:
\[
p(x_1, x_2, \ldots, x_n) = a_1\odot x_1^{i_1^1} x_2^{i_2^1} \ldots x_n^{i_n^1} \oplus a_2\odot x_1^{i_1^2} x_2^{i_2^2} \ldots x_n^{i_n^2}\oplus \ldots \oplus a_m\odot x_1^{i_1^m} x_2^{i_2^m} \ldots x_n^{i_n^m}.
\]
Here the coefficients $a_1, a_2, \ldots a_m$ are real numbers and the exponents $i_k^j$ for ${1\leq k \leq n}$ and $1\leq j \leq m$ are integers. 
\end{definition}

The \emph{total degree of an expression} $p(x_1, x_2, \ldots, x_n)$ is
\[
\operatorname{deg} p = {\max_{1\leq j \leq m}(i_1^j+ i_2^j + \ldots + i_n^j)}.
\]
Tropical expressions form a semiring and are sometimes called tropical polynomials in other sources (see \textit{Tropical Mathematics}~\cite{speyersturm} or \textit{Introduction to Tropical Geometry}~\cite{tropintro}).

Each tropical polynomial expression represents a concave piece-wise linear function from ${(\RR\cup\{\infty \})^n}$ to ${\RR \cup \infty}$. Tropical polynomial expressions whose image is contained in $\RR$ are $\RR$-tropical polynomial expressions.

\begin{example}
Let $n=3$. A tropical monomial expression
\[
x_2 \odot x_1 \odot x_3 \odot x_2 \odot x_2 \odot x_1 = x_1^2 \odot x_2^3 \odot x_3 =x_1^2 x_2^3x_3
\]
represents a linear function
\[
(x_1, x_2, x_3) \mapsto x_2 +x_1 +x_3 +x_2 +x_2 +x_1 = 2x_1 +3 x_2 + x_3.
\]
\end{example}

The passage from tropical polynomial expressions to functions is not one-to-one. For example, 
\[
x_1^2 \oplus x_2^2 = x_1^2 \oplus x_2^2 \oplus x_1x_2.
\]
Let  $p(x_1, x_2, \ldots, x_n)$ and $q(x_1, x_2, \ldots, x_n)$ be tropical polynomial expressions. If
\[
p(x_1, x_2, \ldots, x_n) = q(x_1, x_2, \ldots, x_n)
\]
for all $(x_1, x_2, \ldots, x_n)\in (\RR \cup \infty)^n$, then $p$ and $q$ are \emph{functionally equivalent}. Functional equivalence $\sim$ is an equivalence relation on the set of all tropical polynomial expressions.  We are interested primarily in studying functions, so instead of observing the entire semiring of tropical polynomial expressions we identifiy those expressions that define the same functions.

\begin{definition}
Tropical polynomials are the semiring of equivalence classes of tropical polynomial expressions with respect to $\sim$. In the case of $n$ variables we denote it by $\operatorname{Trop}[x_1, x_2, \ldots, x_n]$. 
\end{definition}
\begin{remark}
Note that our tropical polynomial semiring is not obtained using the standard `polynomial semiring' construction. That construction yields the semiring of tropical polynomial expressions. 
\end{remark}

The degree of a tropical polynomial $p$ is
\[
\textrm{Deg} p = \min_{\{q\,|\,q\sim p\}} \operatorname{deg} q,
\]
where the minimum is taken over all tropical polynomial expressions $q$ which represent $p$. 

In the semiring of tropical polynomials $\operatorname{Trop}[x_1, x_2, \ldots, x_n]$ the operation $\odot$ is not invertible. We have more flexibility to manipulate expressions if we allow inverses with respect to $\odot$. 
\begin{definition}
A tropical rational expression $r$ is a quotient
\[
r(x_1, \ldots, x_n) = p(x_1, \ldots, x_n) \odot q(x_1, \ldots, x_n)^{-1},
\]
where $p$ is a tropical polynomial expression and $q$ is an $\RR$-tropical polynomial expression. 
\end{definition}
\begin{remark}
Tropical rational expressions are the localization of the semiring of tropical polynomial expressions with respect to the
multiplicatively closed set of $\RR$-tropical polynomial expressions.
\end{remark}
We say that tropical rational expressions $r$ and $s$ are \emph{functionally equivalent} and write $r\sim s$ if
\[
r(x_1, x_2, \ldots, x_n) = s(x_1, x_2, \ldots, x_n)
\]
for all $(x_1, x_2, \ldots, x_n)\in (\RR \cup \infty)^n$. 

Since
\[
-\operatorname{min}(a, b) = \operatorname{max}(-a, -b),
\]
tropical rational expressions are composed of taking the maxima and minima of linear functions, i.e. the set of tropical rational expressions is the smallest subset of functions $\RR^n\to \RR$ containing all constant maps, projections and closed under $+$, $\min$ and $\max$. 

Conversely, any function from this set can be represented by an expression of the form $p \odot q^{-1}$, where $p$ and $q$ are tropical polynomial expressions. The algorithm to produce $p$ and $q$ is the usual one of adding fractions by finding a common denominator, but performed in tropical arithmetic.

\begin{example}\label{ex}
Let $r(x_1, x_2) = x_1^{-1}\odot x_2 \oplus (x_2)^{-1} \oplus (x_2 \odot x_1 \oplus x_1)^{-1}$. We can write
\[
\begin{array}{lclc}
r(x_1, x_2)&= & \operatorname{min}(-x_1+ x_2, - x_2, -\operatorname{min}( x_2 + x_1,  x_1))&\\
&=& \operatorname{min}(-x_1+ x_2+
\operatorname{min}( x_2 + x_1,  x_1), - x_2+
\operatorname{min}( x_2 + x_1,  x_1), 0) -\operatorname{min}( x_2 + x_1,  x_1)&\\
&=& \operatorname{min}(\operatorname{min}( 2x_2,  x_2), - x_2+
\operatorname{min}( x_2 + x_1,  x_1), 0) -\operatorname{min}( x_2 + x_1,  x_1)&\\
&=&  \operatorname{min}(\operatorname{min}( 3x_2,  2x_2), 
\operatorname{min}( x_2 + x_1,  x_1), x_2) -\operatorname{min}( 2x_2 + x_1,  x_2+ x_1)&\\
&=& \operatorname{min}( 3x_2,  2x_2,  x_1+x_2, x_1, x_2) -\operatorname{min}( 2x_2 + x_1,  x_1+x_2)&\\
&=&(x_2^3 \oplus x_1 \oplus x_2) \odot (x_2^2x_1 \oplus x_1x_2)^{-1}.
\end{array}
\]
\end{example}

\begin{definition}
The semiring of equivalence classes of tropical rational expressions with respect to this relation is $\operatorname{RTrop}[x_1, x_2, \ldots, x_n]$ and is called the \emph{semiring of rational tropical functions}. 
\end{definition}

\section{Symmetric Tropical Polynomials}
\begin{definition}
A tropical polynomial $p \in \operatorname{Trop}[x_1, x_2, \ldots, x_n]$ is symmetric if 
\[
p(x_1,\ldots, x_n) = p(x_{\pi(1)},\ldots ,x_{\pi(n)})
\]
for every permutation $\pi \in S_n$.
\end{definition}
We denote the semiring of symmetric tropical polynomials by $\operatorname{Trop}[x_1, x_2, \ldots, x_n]^{S_n}$. We work with a fixed $n$ throughout this section.

\begin{example}
Let $n=3$.  Tropical polynomials $x_1^2 \oplus x_2^2 \oplus x_3^2$ and $x_1 \odot x_2 \odot  x_3$ are symmetric. 
\end{example}

We define a symmetrization operator:
\[
\begin{array}{llcl}
\operatorname{Sym} \colon& \operatorname{Trop}[x_1, x_2, \ldots, x_n]& \to &\operatorname{Trop}[x_1, x_2, \ldots, x_n]^{S_n} \\
&&&\\
&  p(x_{1}, x_2, \ldots, x_n)& \mapsto & \bigoplus_{\pi\in S_n} p(x_{\pi(1)}, x_{\pi(2)}, \ldots, x_{\pi(n)}).
\end{array}
\]

\begin{proposition}\label{properties}
Let $p, q \in \operatorname{Trop}[x_1, x_2, \ldots, x_n]$ and $a\in \RR$. Then:
\begin{enumerate}
\item
$\operatorname{Sym}(p\oplus q)= \operatorname{Sym}(p) \oplus \operatorname{Sym}(q)$,
\item
$a \odot \operatorname{Sym}(p) = \operatorname{Sym}(a \odot p)$.
\end{enumerate}
\end{proposition}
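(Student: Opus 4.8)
The plan is to verify both identities pointwise on $(\RR \cup \{\infty\})^n$, where $\oplus$ and $\odot$ are literally $\min$ and $+$. Since an element of $\operatorname{Trop}[x_1,\ldots,x_n]$ is determined by its values as a function, an equality that holds at every point is an equality of tropical polynomials, so no separate check of well-definedness on $\sim$-classes is required. Throughout I would abbreviate $p_\pi := p(x_{\pi(1)},\ldots,x_{\pi(n)})$, so that by definition $\operatorname{Sym}(p) = \bigoplus_{\pi\in S_n} p_\pi$.

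For part (1), I would start from $\operatorname{Sym}(p\oplus q) = \bigoplus_{\pi\in S_n}(p\oplus q)_\pi$. Because $\oplus$ acts pointwise, $(p\oplus q)_\pi = p_\pi \oplus q_\pi$ for every $\pi$, so the tropical sum becomes $\bigoplus_{\pi\in S_n}(p_\pi \oplus q_\pi)$. Commutativity and associativity of $\oplus$ (that is, of $\min$) then allow me to reindex this collection of terms and separate it into $\bigl(\bigoplus_{\pi} p_\pi\bigr) \oplus \bigl(\bigoplus_{\pi} q_\pi\bigr) = \operatorname{Sym}(p)\oplus\operatorname{Sym}(q)$.

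For part (2), I would expand $\operatorname{Sym}(a\odot p) = \bigoplus_{\pi\in S_n}(a\odot p)_\pi$ and observe that the constant $a$ is unaffected by any permutation of the variables, so $(a\odot p)_\pi = a\odot p_\pi$. The distributive law $a\odot b \oplus a\odot c = a\odot(b\oplus c)$, applied repeatedly (equivalently, the identity $\min(a+t_1,\ldots,a+t_k) = a+\min(t_1,\ldots,t_k)$), factors $a$ out of the entire $S_n$-indexed sum, yielding $a\odot\bigoplus_{\pi} p_\pi = a\odot\operatorname{Sym}(p)$.

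There is no genuine obstacle here: both claims are immediate consequences of the semiring axioms recorded in Section~\ref{basics}, namely commutativity and associativity of $\oplus$ for (1) and distributivity of $\odot$ over $\oplus$ for (2). The only point requiring a line of care is the passage from the two-term distributive and associative laws to their $|S_n|$-fold analogues, which is a routine induction on the number of summands.
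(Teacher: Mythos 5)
Your proof is correct, and since the paper simply leaves this verification to the reader, your pointwise argument via commutativity/associativity of $\oplus$ and distributivity of $\odot$ over $\oplus$ is exactly the routine check the authors intend. The remark that it suffices to verify the identities as functions (so no separate well-definedness check on $\sim$-classes is needed) is a sensible point of care.
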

\begin{proof}
We leave the proof to the reader.
\end{proof}


\begin{proposition}\label{iffsym}
Let $p \in \operatorname{Trop}[x_1, \ldots, x_n]$. Then
\[
p \textrm{ is symmetric} \Leftrightarrow \operatorname{Sym}(p) = p.
\]
\end{proposition}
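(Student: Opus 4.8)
The plan is to prove the two implications separately, working throughout at the level of the underlying functions, since two tropical polynomials coincide precisely when they agree as functions on $(\RR \cup \{\infty\})^n$. Under the dictionary $\oplus = \min$, the symmetrization $\operatorname{Sym}(p)$ evaluated at a point is just the minimum of $p$ taken over all permutations of the coordinates, so both sides of the claimed equivalence are honest statements about functions and the whole argument can be carried out pointwise.

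For the implication $\Leftarrow$, it suffices to observe that $\operatorname{Sym}(p)$ is symmetric for \emph{every} $p$; granting this, the hypothesis $\operatorname{Sym}(p) = p$ immediately forces $p$ to be symmetric. To verify that $\operatorname{Sym}(p)$ is symmetric, I would substitute a permutation $\sigma$ of the variables into the definition and reindex the tropical sum:
\[
\operatorname{Sym}(p)(x_{\sigma(1)}, \ldots, x_{\sigma(n)}) = \bigoplus_{\pi \in S_n} p(x_{\sigma(\pi(1))}, \ldots, x_{\sigma(\pi(n))}).
\]
The key point is that as $\pi$ ranges over $S_n$, the composite $\sigma \circ \pi$ also ranges over all of $S_n$, since $S_n$ is a group and left multiplication by $\sigma$ is a bijection. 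Hence the indexed $\oplus$ is merely reordered and its value is unchanged, so $\operatorname{Sym}(p)$ is invariant under $\sigma$, and $\sigma$ was arbitrary.

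For the implication $\Rightarrow$, suppose $p$ is symmetric, so that $p(x_{\pi(1)}, \ldots, x_{\pi(n)}) = p(x_1, \ldots, x_n)$ for every $\pi \in S_n$. Then every summand in the definition of $\operatorname{Sym}(p)$ equals $p$, and by the idempotency of $\oplus$ (namely $a \oplus a = \min(a,a) = a$, applied repeatedly across the $n!$ identical terms) the tropical sum collapses to $p$ itself, giving $\operatorname{Sym}(p) = p$.

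I do not anticipate a genuine obstacle here; the only step requiring care is the reindexing in the $\Leftarrow$ direction, where one must invoke that left multiplication by $\sigma$ permutes $S_n$ bijectively, rather than merely rearranging the terms in some ad hoc fashion. Everything else reduces to the idempotency of $\min$ and routine bookkeeping of composing permutations.
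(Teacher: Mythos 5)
Your proof is correct and follows essentially the same route as the paper: the forward direction collapses the $n!$ identical summands via idempotency of $\oplus$, and the reverse direction rests on $\operatorname{Sym}(p)$ being symmetric for every $p$ (which the paper simply asserts ``by definition'' and you justify by the reindexing $\pi \mapsto \sigma\circ\pi$). No gaps.
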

\begin{proof}
$(\Rightarrow)$ Suppose $p$ is symmetric. Then $p(x_{1}, x_2, \ldots, x_n) =  p(x_{\pi(1)}, x_{\pi(2)}, \ldots, x_{\pi(n)})$ for all $\pi\in S_n$ and $(x_1, x_2, \ldots, x_n) \in \RR^n$. Since $\oplus$ is idempotent,
\[
p(x_{1}, x_{2}, \ldots, x_{n}) = \bigoplus_{\pi\in S_n} p(x_{1}, x_{2}, \ldots, x_{n}) = \bigoplus_{\pi\in S_n} p(x_{\pi(1)}, x_{\pi(2)}, \ldots, x_{\pi(n)})
\]
for all $(x_1, x_2, \ldots, x_n) \in \RR^n$. Consequently, $\operatorname{Sym}(p) = p$.

$(\Leftarrow)$ By definition $\operatorname{Sym}(p)$ is symmetric and since $p = \operatorname{Sym}(p)$ so is $p$.
\end{proof}

The following symmetric tropical polynomials will play an important role in our discussion.
\begin{definition}
Given variables $x_1, \ldots, x_n$, we define the elementary symmetric tropical polynomials $e_1,\ldots, e_n \in \operatorname{Trop}[x_1, x_2, \ldots, x_n]$ by the formulas
\[
\begin{array}{lcl}
e_1 &=& x_1 \oplus \ldots \oplus x_n, \\
&\vdots& \\
e_k &=& \operatorname{Sym}( x_{1}  \odot \ldots \odot x_{k}),\\
&\vdots& \\
e_n &=& x_1\odot x_2 \odot \ldots \odot x_n.
\end{array}
\]
\end{definition}
The total degree of expression $e_k$ is $k$. Elementary symmetric tropical polynomials give coordinates on $\RR^n/ S_n$. In other words, they separate orbits.
\begin{proposition}
Let  $[(x_1, \ldots, x_n)]$ and $[(y_1, \ldots, y_n)]$ be two orbits under the $S_n$-action on $\RR^n$. If 
\[
e_i([(x_1, \ldots, x_n)]) = e_i([(y_1, \ldots, y_n)])
\]
for all $i$, then $[(x_1, \ldots, x_n)] = [(y_1, \ldots, y_n)]$.
\end{proposition}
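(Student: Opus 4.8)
The plan is to show that the values $e_1, \ldots, e_n$ determine, and are determined by, the sorted version of the tuple, so that two tuples with matching $e_i$ must be rearrangements of one another. First I would translate the tropical definition into ordinary arithmetic. Since $\oplus = \min$ and $\odot = +$, and since $\operatorname{Sym}$ takes the $\oplus$-sum over all permutations, the polynomial $e_k = \operatorname{Sym}(x_1 \odot \cdots \odot x_k)$ evaluates to
\[
e_k(x_1, \ldots, x_n) = \min_{\substack{S \subseteq \{1, \ldots, n\} \\ |S| = k}} \sum_{i \in S} x_i,
\]
the minimum, over all $k$-element subsets, of the sum of the corresponding coordinates (running over permutations or over $k$-subsets gives the same value, since the sum does not depend on the order). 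Because this is a minimum over all subsets, each $e_k$ is manifestly $S_n$-invariant, so the quantity $e_i([(x_1, \ldots, x_n)])$ appearing in the statement is well-defined on orbits.

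Next I would evaluate this minimum explicitly. Fixing a representative and sorting its entries as $x_{(1)} \leq x_{(2)} \leq \cdots \leq x_{(n)}$, the key claim is that the minimizing $k$-subset is $\{(1), \ldots, (k)\}$, so that
\[
e_k(x_1, \ldots, x_n) = x_{(1)} + x_{(2)} + \cdots + x_{(k)}.
\]
This is the standard fact that the smallest sum of $k$ entries is the sum of the $k$ smallest entries, and I would prove it by an exchange argument: if a minimizing subset $S$ omits an index of rank $j \leq k$ while containing an index of rank $j' > k$, then replacing the rank-$j'$ index by the rank-$j$ index cannot increase the sum; iterating drives $S$ to $\{(1), \ldots, (k)\}$ without ever increasing the value. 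I expect this swap step to be the only place requiring genuine care, although it is entirely routine.

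Finally I would invert these relations. From the displayed formula,
\[
x_{(1)} = e_1, \qquad x_{(k)} = e_k - e_{k-1} \quad (2 \leq k \leq n),
\]
so the full sorted tuple is recovered from the real numbers $e_1, \ldots, e_n$. Consequently, if $e_i([(x_1, \ldots, x_n)]) = e_i([(y_1, \ldots, y_n)])$ for every $i$, then the sorted tuples of the two chosen representatives agree entrywise. Two tuples with the same sorted form are permutations of one another, and therefore $[(x_1, \ldots, x_n)] = [(y_1, \ldots, y_n)]$, which is exactly the assertion that the elementary symmetric tropical polynomials separate orbits.
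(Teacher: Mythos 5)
Your proof is correct and takes essentially the same route as the paper's: both sort the representatives, identify $e_k$ with the sum of the $k$ smallest entries, and recover the sorted tuple by successive differences $x_{(k)} = e_k - e_{k-1}$. The only difference is that you spell out, via the exchange argument, why the minimizing $k$-subset consists of the $k$ smallest entries, a fact the paper uses without comment.
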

\begin{proof}
Suppose $[(x_1, \ldots, x_n)]$ and $[(y_1, \ldots, y_n)]$ are orbits for which 
\[
e_i([(x_1, \ldots, x_n)]) = e_i([(y_1, \ldots, y_n)])
\]
 for all $i$. Let $(x_1, \ldots, x_n) \in [(x_1, \ldots, x_n)]$ be such that $x_1 \leq x_2 \leq \ldots \leq x_n$ and $(y_1, \ldots, y_n) \in [(y_1, \ldots, y_n)]$ such that ${y_1 \leq y_2 \leq \ldots \leq y_n}$. 
Since 
\[
e_1([(x_1, \ldots, x_n)]) = e_1([(y_1, \ldots, y_n)]),
\]
it follows that $x_{1} = y_{1} = e_1([(x_1, \ldots, x_n)])$. Next note that
\[
x_1+x_2 = e_2([(x_1, \ldots, x_n)])  = e_2([(y_1, \ldots, y_n)]) = y_1+y_2.
\]
Since $x_1=y_1$, this implies $x_2 = y_2$. We repeat these steps until we get $x_i = y_i$ for $i\leq n-1$. Lastly, 
\[
x_1 + x_2 + \ldots + x_n =e_n([(x_1, \ldots, x_n)]) = e_n([(y_1, \ldots, y_n)]) = y_1 + y_2 + \ldots + y_n.
\]
Since $x_i = y_i$ for $i\leq n-1$, it follows from this last equation that $x_n = y_n$ and we are done.
\end{proof}
The goal of the remainder of this section is to prove the following theorem, which states that elementary symmetric polynomials generate the symmetric tropical polynomials. 
\begin{theorem}\label{sym}
Every symmetric tropical polynomial in $\operatorname{Trop}[x_1, x_2, \ldots, x_n]$ can be written as a tropical polynomial in the elementary symmetric tropical polynomials $e_1, \ldots, e_n$ and $e_n^{-1}$. 
\end{theorem}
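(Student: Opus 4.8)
\section*{Proof proposal}

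The plan is to reduce the statement to a single explicit computation — the symmetrization of one monomial — and then to evaluate that symmetrization on the region where the variables are sorted. First I would use the structural facts already in hand. Write an arbitrary symmetric $p$ as a tropical polynomial expression $p = \bigoplus_{j=1}^m a_j \odot \mu_j$, where each $\mu_j = x_1^{c_1^j}\cdots x_n^{c_n^j}$ is a monomial expression with integer exponents and $a_j \in \RR$. Since $p$ is symmetric, Proposition~\ref{iffsym} gives $p = \operatorname{Sym}(p)$, and Proposition~\ref{properties} then yields
\[
p = \operatorname{Sym}\Big(\bigoplus_j a_j \odot \mu_j\Big) = \bigoplus_j a_j \odot \operatorname{Sym}(\mu_j).
\]
Hence it suffices to express each symmetrized monomial $\operatorname{Sym}(\mu_j)$ in terms of $e_1, \ldots, e_n$ and $e_n^{-1}$: a tropical sum of such expressions, scaled by constants, is again a tropical polynomial in the $e_i$.

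The key lemma I would isolate is an explicit formula for a single symmetrized monomial. Because $\operatorname{Sym}(\mu)$ depends only on the multiset of exponents, I may relabel so that $c_1 \ge c_2 \ge \cdots \ge c_n$, and then claim
\[
\operatorname{Sym}(x_1^{c_1}\cdots x_n^{c_n}) = e_1^{\,c_1-c_2}\odot e_2^{\,c_2-c_3}\odot \cdots \odot e_{n-1}^{\,c_{n-1}-c_n}\odot e_n^{\,c_n}.
\]
To prove this I would restrict to the fundamental domain $x_1 \le x_2 \le \cdots \le x_n$. There $\operatorname{Sym}(\mu) = \min_{\sigma \in S_n}\sum_i c_{\sigma(i)} x_i$, and by the rearrangement inequality the minimum is attained by pairing the largest exponent with the smallest variable, so it equals $\sum_{i=1}^n c_i x_i$. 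On this same region $e_k = x_1 + \cdots + x_k$, whence $x_k = e_k - e_{k-1}$ with $e_0 = 0$; summation by parts then rewrites $\sum_i c_i x_i$ as $c_n e_n + \sum_{i=1}^{n-1}(c_i - c_{i+1}) e_i$, which is exactly the right-hand side above. Both sides of the identity are symmetric functions — the left by construction, the right because each $e_k$ is — and they agree on the fundamental domain, so they are functionally equivalent on all of $(\RR\cup\{\infty\})^n$.

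The point that most needs care is the exponent bookkeeping, and it is also what makes the precise form of the theorem come out right: the differences $c_i - c_{i+1}$ are non-negative for $1 \le i \le n-1$, so $e_1, \ldots, e_{n-1}$ appear only with non-negative powers, as required for a genuine tropical polynomial in these generators, whereas the leftover exponent $c_n$ on $e_n$ can be negative — which is exactly why $e_n^{-1}$ must be permitted but no other inverse is needed. I expect the main obstacles to be the justification of the rearrangement step and the passage from agreement on the fundamental domain to global functional equivalence; once the monomial formula is established, assembling the theorem from the reduction in the first paragraph is immediate.
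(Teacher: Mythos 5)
Your proof is correct, and it reaches the theorem by a genuinely different route from the paper's. Both arguments start the same way, reducing via Propositions~\ref{iffsym} and~\ref{properties} to the case of a single symmetrized monomial, and your closed form
\[
\operatorname{Sym}(x_1^{c_1}\odot\cdots\odot x_n^{c_n}) \;=\; e_1^{\,c_1-c_2}\odot\cdots\odot e_{n-1}^{\,c_{n-1}-c_n}\odot e_n^{\,c_n}
\qquad (c_1\ge\cdots\ge c_n)
\]
is exactly the expression that the paper's algorithm ultimately outputs. The difference is in how this is justified. The paper proves the peeling identity $e_k^a\odot\operatorname{Sym}(x_1^{i_1-a}\odot\cdots\odot x_k^{i_k-a})=\operatorname{Sym}(x_1^{i_1}\odot\cdots\odot x_k^{i_k})$ of Lemma~\ref{Step1} by a term-by-term combinatorial comparison of the two symmetrizations, iterates it by induction on degree in Lemma~\ref{Step2}, and clears negative exponents at the end by multiplying by a power of $e_n$. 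You instead verify the closed form directly as an identity of functions: on the chamber $x_1\le\cdots\le x_n$ the rearrangement inequality evaluates the symmetrized monomial as $\sum_i c_i x_i$ and each $e_k$ as the partial sum $x_1+\cdots+x_k$, and $S_n$-invariance of both sides propagates the equality from the chamber to all of $\RR^n$. Your route is shorter and more transparent, treats negative exponents uniformly (only the exponent of $e_n$ can be negative, which is precisely why $e_n^{-1}$ and no other inverse is needed), and makes the minimality statement of Corollary~\ref{unique} nearly immediate since the output expression is explicit. What the paper's heavier route buys is an identity between expressions that it can then try to imitate in the $2$-symmetric setting of Lemma~\ref{multiStep1}, where the analogue of Lemma~\ref{Step1} only holds up to a correction term and forces the passage to rational functions; a single-chamber evaluation does not carry over there. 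The only caveat worth recording is that your chamber argument works over $\RR^n$ while functional equivalence is nominally defined over $(\RR\cup\{\infty\})^n$, but this boundary issue is glossed over in the paper itself and does not affect the argument.
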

This theorem does not hold for polynomial semirings over general semirings. One counterexample, suggested by the editor, is the semiring of tropical polynomial expressions. Tropical expression $x^2\oplus y^2$ is not a polynomial in $e_1$,$e_2$ on the level of expressions, but equals $e_1^2$ in $\operatorname{Trop}[x,y]$. 

\begin{lemma}\label{refGstep1}
Let us suppose that a symmetric tropical polynomial $p$ is represented by 
\[
\bigoplus_{1\leq s \leq m}a_s \odot x_{1}^{i_1^s} \odot \ldots \odot x_{n}^{i_n^s}.
\]
Then
\[
p(x_1, x_2, \ldots, x_n) = \bigoplus_{1\leq s \leq m}  \operatorname{Sym}(a_s \odot x_{1}^{i_1^s} \odot \ldots \odot x_{n}^{i_n^s}).
\] 
\end{lemma}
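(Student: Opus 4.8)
The plan is to reduce the statement to two facts already in hand: that a symmetric polynomial is fixed by the symmetrization operator (Proposition \ref{iffsym}), and that $\operatorname{Sym}$ is additive with respect to $\oplus$ (Proposition \ref{properties}). Since $p$ is assumed symmetric, Proposition \ref{iffsym} gives $\operatorname{Sym}(p) = p$, so it suffices to evaluate $\operatorname{Sym}(p)$ by pushing the symmetrization operator through the given monomial representation of $p$.

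Writing $M_s = a_s \odot x_{1}^{i_1^s} \odot \ldots \odot x_{n}^{i_n^s}$ for the $s$-th monomial, the hypothesis reads $p = \bigoplus_{1\leq s\leq m} M_s$. Part (1) of Proposition \ref{properties} records $\operatorname{Sym}(M \oplus N) = \operatorname{Sym}(M) \oplus \operatorname{Sym}(N)$ for two summands; a routine induction on $m$, using associativity of $\oplus$, upgrades this to $\operatorname{Sym}\!\left(\bigoplus_{s} M_s\right) = \bigoplus_{s} \operatorname{Sym}(M_s)$. Chaining the two observations then yields $p = \operatorname{Sym}(p) = \bigoplus_{1\leq s\leq m} \operatorname{Sym}(M_s)$, which is exactly the claimed identity.

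The only point demanding any attention is the extension of the two-term additivity of $\operatorname{Sym}$ to the $m$-term version, and even that is immediate from associativity, so there is no genuine obstacle. The substance of the lemma is simply that symmetrization behaves like a tropical-linear projection onto the symmetric polynomials; the statement packages this as an explicit monomial expansion, which is the form that will be convenient when proving Theorem \ref{sym}, since it reduces the generation problem to understanding each symmetrized monomial $\operatorname{Sym}(M_s)$ individually.
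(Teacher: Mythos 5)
Your argument is correct and is precisely the one the paper intends: its proof of this lemma consists of the single line ``Follows from Propositions~\ref{iffsym} and~\ref{properties},'' which unpacks exactly to your chain $p = \operatorname{Sym}(p) = \bigoplus_s \operatorname{Sym}(M_s)$ via the $m$-term additivity of $\operatorname{Sym}$. You have simply made explicit the routine induction on $m$ that the paper leaves to the reader.
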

\begin{proof}
Follows from Propositions~\ref{iffsym} and \ref{properties}.
\end{proof}

\begin{lemma}\label{Step1}
Suppose $i_{1}, i_{2}, \ldots, i_{k}$, $k\leq n$, are positive integers and ${a =\operatorname{min}(i_{1}, i_{2}, \ldots, i_{k})}$. Then
\[
e_{k}^a \odot \operatorname{Sym}(x_{1}^{i_{1}-a}  \odot \ldots \odot x_{k}^{i_{k}-a} ) = \operatorname{Sym}(x_{1}^{i_{1}}  \odot \ldots \odot x_{k}^{i_{k}} ).
\]
\end{lemma}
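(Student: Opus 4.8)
The plan is to translate the statement into ordinary min-plus arithmetic and prove the corresponding equality of piecewise-linear functions. Recall that $\odot = +$ and $\oplus = \min$, so for an exponent vector $\mathbf{b} = (b_1, \ldots, b_n)$ the symmetrization of the monomial $x_1^{b_1}\cdots x_n^{b_n}$ is the function
\[
\operatorname{Sym}(x_1^{b_1}\cdots x_n^{b_n})(x_1, \ldots, x_n) = \min_{\pi \in S_n} \sum_{j=1}^n b_j\, x_{\pi(j)}.
\]
Since both sides of the claimed identity are symmetric, it suffices to verify the equality on the chamber $x_1 \le x_2 \le \cdots \le x_n$; the case of coordinates equal to $\infty$ then follows because all functions involved are continuous piecewise-linear maps on $(\RR \cup \{\infty\})^n$ that agree on the dense subset $\RR^n$.

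The key tool is the rearrangement inequality. I would first record the auxiliary fact that, on the sorted chamber $x_1 \le \cdots \le x_n$, one has
\[
\operatorname{Sym}(x_1^{b_1}\cdots x_n^{b_n})(x_1, \ldots, x_n) = \sum_{j=1}^n b_{[j]}\, x_j,
\]
where $b_{[1]} \ge b_{[2]} \ge \cdots \ge b_{[n]}$ denotes the exponents sorted in decreasing order. Indeed, among all permutations the sum $\sum_j b_j x_{\pi(j)}$ is minimized precisely when the largest exponents are paired with the smallest variables, which is exactly the minimizing case of the rearrangement inequality. This is the step I expect to require the most care to phrase cleanly, and it is the only genuine content of the argument. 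As an immediate special case, taking $\mathbf{b} = (1, \ldots, 1, 0, \ldots, 0)$ with $k$ ones gives $e_k(x_1, \ldots, x_n) = x_1 + \cdots + x_k$ on this chamber, recovering the familiar description of $e_k$ as the sum of the $k$ smallest coordinates.

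It then remains to apply this to the three monomials in the statement. Writing $i_{[1]} \ge \cdots \ge i_{[k]}$ for the given exponents sorted decreasingly, observe that $a = \min(i_1, \ldots, i_k) = i_{[k]}$, and that padding an exponent vector with zeros does not affect the sorted pairing. Hence, on the sorted chamber,
\[
\operatorname{Sym}(x_1^{i_1}\cdots x_k^{i_k}) = \sum_{j=1}^k i_{[j]}\, x_j, \qquad \operatorname{Sym}(x_1^{i_1-a}\cdots x_k^{i_k-a}) = \sum_{j=1}^k (i_{[j]}-a)\, x_j.
\]
Combining these with $e_k^a = a(x_1 + \cdots + x_k)$ yields
\[
e_k^a \odot \operatorname{Sym}(x_1^{i_1-a}\cdots x_k^{i_k-a}) = a\sum_{j=1}^k x_j + \sum_{j=1}^k (i_{[j]}-a)\, x_j = \sum_{j=1}^k i_{[j]}\, x_j = \operatorname{Sym}(x_1^{i_1}\cdots x_k^{i_k}),
\]
which is the desired identity on the sorted chamber, and therefore everywhere by symmetry.

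In summary, the whole proof rests on two points: the reduction to the sorted chamber by symmetry, and the rearrangement lemma identifying $\operatorname{Sym}$ of a monomial with the oppositely-sorted pairing. Once these are in place, the final combination is pure bookkeeping, with the exponents $a$ cancelling exactly because $a = i_{[k]}$ is the smallest exponent. The main obstacle is thus entirely contained in establishing the rearrangement lemma and in verifying that the reduction to $x_1 \le \cdots \le x_n$ (together with the passage to $\infty$-valued coordinates) is legitimate.
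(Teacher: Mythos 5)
Your proof is correct, but it takes a genuinely different route from the paper's. You reduce to the sorted chamber $x_1\le\cdots\le x_n$ (legitimate, since every function in sight is symmetric) and invoke the rearrangement inequality to get the closed form $\operatorname{Sym}(x_1^{b_1}\cdots x_n^{b_n})=\sum_j b_{[j]}x_j$ there, after which the identity is a one-line cancellation using $a=i_{[k]}$. The paper instead works globally with the semiring expressions: it expands $e_k^a\odot\operatorname{Sym}(\cdots)$ by distributivity into a double minimum over pairs of permutations $(\rho,\pi)$, observes that the inequality $\ge$ in one direction is trivial (the right-hand side is a minimum over a smaller index set), and proves the reverse inequality term by term via the pointwise estimate $x^{i-a}\odot y^a\ge (x\oplus y)^i=x^i\oplus y^i$ (valid since $0\le a\le i$) together with the Freshman's dream, bounding each mixed term below by a minimum over a subset $P\subseteq S_n$ of the target terms. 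Your approach is arguably more transparent and proves something slightly stronger (an explicit formula for the symmetrization of any monomial on the fundamental domain, which would also immediately give the orbit-separation result); the paper's approach has the advantage of never leaving formal semiring manipulations, so it handles $\infty$-valued coordinates automatically, whereas your passage from $\RR^n$ to $(\RR\cup\{\infty\})^n$ by a density/continuity argument is the one place you should be a bit more careful (it does go through because all exponents involved are nonnegative, so the functions are monotone and continuous as coordinates tend to $\infty$, but as written it is asserted rather than checked).
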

\begin{proof}[Proof of Lemma \ref{Step1}]
Since the Frobenius identity holds in tropical arithmetic, the expression on the left equals
\[
(\bigoplus_{\rho \in S_n} x_{\rho(1)}^a \odot \ldots \odot x_{\rho(k)}^a)  \odot (\bigoplus_{\pi\in S_n} x_{\pi(1)}^{i_{1}-a} \odot \ldots  \odot x_{\pi(k)}^{i_{k}-a}). 
\]
By distributivity and commutativity, we can rewrite it as 
\[
\bigoplus_{\rho \in S_n} \bigoplus_{\pi\in S_n} x_{\pi(1)}^{i_{1}-a} \odot \ldots  \odot x_{\pi(k)}^{i_{k}-a} \odot x_{\rho(1)}^a \odot  \ldots \odot x_{\rho(k)}^a.
\]
We must show that
\[
\bigoplus_{\rho \in S_n} \bigoplus_{\pi\in S_n} x_{\pi(1)}^{i_{1}-a} \odot \ldots  \odot x_{\pi(k)}^{i_{k}-a} \odot x_{\rho(1)}^a \odot  \ldots \odot x_{\rho(k)}^a = \bigoplus_{\sigma \in S_n} x_{\sigma(1)}^{i_{1}} \odot \ldots \odot x_{\sigma(k)}^{i_{k}}.
\]
The right hand side is bigger than the left hand side since the minimum is taken over a smaller set. We must show that
\[
x_{\pi(1)}^{i_{1}-a} \odot \ldots  \odot x_{\pi(k)}^{i_{k}-a} \odot x_{\rho(1)}^a \odot  \ldots \odot x_{\rho(k)}^a \geq \bigoplus_{\sigma \in S_n} x_{\sigma(1)}^{i_{1}} \odot \ldots \odot x_{\sigma(k)}^{i_{k}}
\]
for any $\pi, \rho\in S_n$ and the claim will follow. Let 
\[
M =\left \{ m \in \{1, 2, \ldots, k \}\,|\, j_m \in \{1, 2, \ldots, k \} \textrm{ exists such that }\pi(m) =\rho(j_m) \right \}
\]
and let 
\[
J =\left \{ j_m \in \{1, 2, \ldots, k \}\,|\, m \in \{1, 2, \ldots, k \} \textrm{ exists such that }\pi(m) =\rho(j_m) \right \}.
\]
We denote the elements of $M$ by $m_1, \ldots, m_l$, the elements of $\{1, 2, \ldots, k \}\setminus M$ by $s_1, \ldots, s_{k-l}$ and the elements of $\{1, 2, \ldots, k \}\setminus J$ by $q_1, \ldots, q_{k-l}$. We simplify the expression
\[{x_{\pi(1)}^{i_{1}-a} \odot \ldots  \odot x_{\pi(k)}^{i_{k}-a} \odot x_{\rho(1)}^a \odot  \ldots \odot x_{\rho(k)}^a} 
\]
to
\[
\bigodot_{r=1}^{l} x_{\pi(m_r)}^{i_{m_r}} \odot \bigodot_{r=1}^{k-l} x_{\pi(s_r)}^{i_{s_r}-a}  \odot \bigodot_{r=1}^{k-l} x_{\rho(q_r)}^{a}.
\]
For all $r =1, \ldots, k-l$
\[
\begin{array}{c}
x_{\pi(s_r)}^{i_{s_r}-a} \odot x_{\rho(q_r)}^{a} \geq  (x_{\pi(s_r)} \oplus x_{\rho(q_r)})^{i_{s_r}} = x_{\pi(s_r)}^{i_{s_r}} \oplus x_{\rho(q_r)}^{i_{s_r}}.\\
\end{array}
\]
Tropically multiplying (adding) these inequalities for applicable $r$ yields
\[
\bigodot_{r=1}^{l} x_{\pi(m_r)}^{i_{m_r}}  \odot \bigodot_{r=1}^{k-l} (x_{\pi(s_r)}^{i_{s_r}-a}  \odot  x_{\rho(q_r)}^{a}) \geq  \bigodot_{r=1}^{l} x_{\pi(m_r)}^{i_{m_r}}  \odot \bigodot_{r=1}^{k-l} (x_{\pi(s_r)}^{i_{s_r}} \oplus x_{\rho(q_r)}^{i_{s_r}}).
\]
By distributivity
\[
\bigodot_{r=1}^{l} x_{\pi(m_r)}^{i_{m_r}}  \odot \bigodot_{r=1}^{k-l} (x_{\pi(s_r)}^{i_{s_r}} \oplus x_{\rho(q_r)}^{i_{s_r}})
=
\bigoplus_{\sigma \in  P} x_{\sigma(1)}^{i_{m_1}} \odot \ldots \odot x_{\sigma(l)}^{i_{m_l}}  \odot x_{\sigma(l+1)}^{i_{s_1}} \odot \ldots \odot  x_{\sigma(k)}^{i_{s_{k-l}}},
\]
where $\sigma \in P$ if $\sigma(r) =  \pi(m_r)$ for $r \in \{1, \ldots, l\}$ and $\sigma(r) \in \{ \pi(s_{r-l}), \rho(q_{r-l}) \}$ for all $r \in \{l+1, \ldots, k\}$. Since $P \subseteq S_n$, 
\[
\bigoplus_{\sigma \in  P} x_{\sigma(1)}^{i_{m_1}} \odot \ldots \odot x_{\sigma(l)}^{i_{m_l}}  \odot x_{\sigma(l+1)}^{i_{s_1}} \odot \ldots \odot  x_{\sigma(k)}^{i_{s_{k-l}}}\geq  \bigoplus_{\sigma \in S_n} x_{\sigma(1)}^{i_{1}} \odot \ldots \odot x_{\sigma(k)}^{i_{k}},
 \]
 because we are taking the minimum over a bigger set on the right hand side.

\end{proof}

\begin{lemma}\label{Step2}
We can express the symmetrization of any tropical monomial with nonnegative powers as a tropical polynomial in the elementary symmetric polynomials $e_1, \ldots, e_n$.
\end{lemma}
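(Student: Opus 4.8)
The plan is to prove the statement by induction on the number $k$ of variables that occur with a strictly positive exponent, using Lemma~\ref{Step1} as the main engine. Since $\operatorname{Sym}$ is invariant under permutation of the variables and $\odot$ is commutative, I would first reorder the variables so that any given monomial with nonnegative powers takes the form $x_1^{i_1} \odot \ldots \odot x_k^{i_k}$ with $i_1, \ldots, i_k > 0$ and the remaining variables absent; here $k$ counts exactly the variables carrying a positive exponent. This reduction is harmless because two monomials that differ only in which variables carry the positive exponents have the same symmetrization. For the base case $k = 0$ the monomial is the tropical unit and $\operatorname{Sym}(1) = 1$, a constant, which is trivially a tropical polynomial in $e_1, \ldots, e_n$; for $k = 1$ the Frobenius identity gives directly $\operatorname{Sym}(x_1^{i_1}) = e_1^{i_1}$.

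For the inductive step I would assume $k \geq 1$ and set $a = \min(i_1, \ldots, i_k) \geq 1$. Lemma~\ref{Step1} then yields
\[
\operatorname{Sym}(x_1^{i_1} \odot \ldots \odot x_k^{i_k}) = e_k^a \odot \operatorname{Sym}(x_1^{i_1 - a} \odot \ldots \odot x_k^{i_k - a}).
\]
The exponent achieving the minimum becomes $0$ after subtracting $a$, so the monomial $x_1^{i_1 - a} \odot \ldots \odot x_k^{i_k - a}$ involves strictly fewer than $k$ variables with positive exponent. By the inductive hypothesis its symmetrization is a tropical polynomial in $e_1, \ldots, e_n$, and multiplying by the monomial $e_k^a$ keeps it a tropical polynomial in the $e_i$, which completes the induction.

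The real content of the argument is Lemma~\ref{Step1}, which already performs the difficult identity; what remains here is essentially bookkeeping. The two points I would be careful about are that the reduction strictly decreases $k$ (guaranteed because $a$ equals one of the exponents, so at least one new exponent vanishes) and that, since all powers are nonnegative and $a \geq 0$, no inverse of $e_n$ is ever introduced. This last observation is exactly why this lemma produces a genuine polynomial in $e_1, \ldots, e_n$ rather than in $e_1, \ldots, e_n, e_n^{-1}$, the inverse being needed only later to handle monomials with negative exponents in the proof of Theorem~\ref{sym}.
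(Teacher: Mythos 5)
Your proof is correct and follows essentially the same route as the paper's: both factor the symmetrized monomial via Lemma~\ref{Step1} as $e_k^a$ times the symmetrization of a smaller monomial, with $a$ the minimum of the positive exponents, and then recurse. The only difference is the induction variable --- you induct on the number of variables carrying a positive exponent (which strictly drops because the minimizing exponent becomes zero), while the paper inducts on the total degree (which strictly drops because $a>0$); both orderings are well-founded, so either works.
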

\begin{proof}
We prove the statement by this induction on $\operatorname{Deg} p$ .

If $\operatorname{Deg} p = 0$, then $p \equiv a = a(e_1, \ldots, e_n)$.

Suppose now that we can express all symmetric tropical polynomials of the required form of total degree less than $m$ as tropical polynomials in $e_1, \ldots, e_n$.

Let
\[
p(x_1,\ldots, x_n) =\operatorname{Sym}(a \odot x_{1}^{i_1} \odot \ldots \odot x_{n}^{i_n}) = a \odot \operatorname{Sym}(x_{1}^{i_1} \odot \ldots \odot x_{n}^{i_n}),\]
where $\operatorname{Deg} p = \operatorname{deg}\operatorname{Sym}(a \odot x_{1}^{i_1} \odot \ldots \odot x_{n}^{i_n}) = m$ and $i_1, \ldots, i_n$ are nonnegative. 
 
Suppose exactly $i_{j_1}, i_{j_2}, \ldots, i_{j_k}$ are nonzero. By Lemma \ref{Step1} we can write
\[
p(x_1, \ldots, x_n) = a \odot  e_{k}^b \odot \operatorname{Sym}(x_{1}^{i_{j_1}-b}  \odot \ldots \odot x_{k}^{i_{j_k}-b} ) 
\]
where $b =  \operatorname{min}(i_{j_1}, \ldots, i_{j_k}) > 0$.

 Since 
\[
\operatorname{Deg}\operatorname{Sym}(x_{1}^{i_{j_1}-b}  \odot \ldots \odot x_{k}^{i_{j_k}-b}) < m,
\]
 the claim follows by induction for polynomials of the specified form.
\end{proof}

\begin{remark}
As remarked by the referee, Lemmas~\ref{Step1} and ~\ref{Step2} have a nice geometric
interpretation: the permutahedron is the Minkowski sum of its
hypersimplices.
\end{remark}

\begin{proof}[Proof of Theorem~\ref{sym}]
Let $p$ be a symmetric tropical polynomial in which all monomials have nonnegative powers.  By Lemma~\ref{refGstep1} we can write it as 
\[
p(x_1, \ldots, x_n) =  \bigoplus_{1\leq s \leq m}  \operatorname{Sym}(a_s \odot x_{1}^{i_1^s} \odot \ldots \odot x_{n}^{i_n^s}).
\]
By Lemma \ref{Step2} each $\operatorname{Sym}(a_s \odot x_{1}^{i_1^s} \odot \ldots \odot x_{n}^{i_n^s})$ can be written as a tropical polynomial in $e_1, \ldots, e_n$. Therefore so can $p$. 

Let $q$ be any symmetric tropical polynomial. We can write it as $q = \frac{q e_n^j}{e_n^j}$, where $j$ is such an integer that $qe_n^j$ is a symmetric tropical polynomial in which all monomials have nonnegative powers. 
\end{proof}
A symmetric tropical polynomial $p$ can be written in terms of elementary symmetric tropical polynomials in many ways. Therefore the uniqueness statement of the Fundamental Theorem of Symmetric Polynomials does not hold in the tropical setting. However, if we work with a particular tropical expression we can make an analogue claim. \emph{The minimal representation} of a tropical polynomial $p$ is such a tropical expression
\[
a_1\odot x_1^{i_1^1} x_2^{i_2^1} \ldots x_n^{i_n^1} \oplus a_2\odot x_1^{i_1^2} x_2^{i_2^2} \ldots x_n^{i_n^2}\oplus \ldots \oplus a_m\odot x_1^{i_1^m} x_2^{i_2^m} \ldots x_n^{i_n^m}
\]
 functionally equivalent to $p$ that for each $1\leq j \leq m$ there exists a point ${(x_1, x_2, \ldots, x_n) \in \RR^n}$, so that 
\[
a_j+i_1^jx_1 +\ldots +i_n^jx_n < \min_{1\leq s\leq m, s \neq j} (a_s+i_1^s x_1 +\ldots + i_n^s x_n).
\]

We define the \emph{minimal symmetrization} of a tropical polynomial $p$ as follows. The monomial $a_1\odot x_1^{i_1^1} x_2^{i_2^1} \ldots x_n^{i_n^1}$ appears in the minimal representation of $p$ if and only if $a_1\odot x_{\pi(1)}^{i_1^1} x_{\pi(2)}^{i_2^1} \ldots x_{\pi(n)}^{i_n^1}$ appears for any permutation $\pi \in S_n$. It might happen that some permutations yield the same monomial up to the order of factors --- in that case only one of these permuted monomials appears in the minimal representation. We call the collection of these non-repeating monomials the minimal symmetrization of $a_1\odot x_1^{i_1^1} x_2^{i_2^1} \ldots x_n^{i_n^1}$ and denote it by 
 \[
\operatorname{MinSym} (a_1\odot x_1^{i_1^1} x_2^{i_2^1} \ldots x_n^{i_n^1}).
 \]
$\operatorname{MinSym}p$ and $\operatorname{Sym}p$ are the same as functions.

\begin{corollary}[Uniqueness]\label{unique}
If we apply the algorithm used to prove Theorem~\ref{sym} to the minimal representation of a symmetric tropical polynomial, then the polynomial expression in $e_1, \ldots, e_n$ and $e_n^{-1}$ is also minimal.
\end{corollary}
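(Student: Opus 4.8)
The plan is to compare the minimal representation of $p$ with the expression the algorithm produces by passing to a fundamental domain for the $S_n$-action, on which the elementary symmetric polynomials become a linear change of coordinates. Minimality of a minimum-of-affine-functions expression means exactly that each affine piece is the unique minimum at some point, and this property is preserved under composition with a linear isomorphism; the whole argument is a transfer of this property along such an isomorphism.

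First I would record what the algorithm of Theorem~\ref{sym} does term by term. By Lemma~\ref{refGstep1} the minimal representation of a symmetric $p$ is the tropical sum of the minimal symmetrizations $\operatorname{MinSym}(a\odot x_1^{i_1}\cdots x_n^{i_n})$, one per orbit of monomials, and the proof of Lemma~\ref{Step2} (iterating Lemma~\ref{Step1}) turns each orbit into a \emph{single} monomial $a\odot e_1^{c_1}\cdots e_n^{c_n}$. Sorting the exponents of the orbit into a partition $\lambda_1\ge\cdots\ge\lambda_n\ge 0$ and setting $\lambda_{n+1}=0$, one checks $c_j=\lambda_j-\lambda_{j+1}$; the assignment $\lambda\mapsto c$ is a bijection from partitions with at most $n$ parts onto $\ZZ_{\ge0}^n$, with inverse $\lambda_j=\sum_{l\ge j}c_l$. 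Hence distinct orbits give distinct $e$-monomials: the algorithm neither merges terms nor creates new ones, so its output (functionally equivalent to $p$ by Theorem~\ref{sym}) has exactly one monomial per term of the minimal representation. To handle $e_n^{-1}$ I would first reduce to nonnegative powers via $q=(q\odot e_n^j)\odot e_n^{-j}$; multiplying by the invertible monomial $e_n^{\pm j}$ shifts every affine piece by the same quantity, so it changes neither which piece is minimal nor the values $c_1,\dots,c_{n-1}$, and thus preserves minimality on both sides.

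Next I would set up the coordinate change. Let $C=\{x_1\le\cdots\le x_n\}$, with interior $\inte C=\{x_1<\cdots<x_n\}$, and let $L(x)=(x_1,\,x_1+x_2,\,\dots,\,x_1+\cdots+x_n)$ be the partial-sum map, a linear automorphism of $\RR^n$. On $C$ the $j$-th elementary symmetric polynomial is the sum of the $j$ smallest coordinates, so $e_j(x)=L(x)_j$ there; thus $L$ restricts to a linear isomorphism of $C$ onto a full-dimensional cone $D=L(C)$, carrying $\inte C$ onto the open set $\inte D$. By the rearrangement inequality, on $C$ the orbit $\operatorname{MinSym}(a\odot x^{\lambda})$ equals the affine function $f(x)=a+\sum_j\lambda_j x_j$, and Abel summation rewrites this as $a+\sum_j c_j\,L(x)_j$. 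In other words $f(x)=g(L(x))$ on $C$, where $g$ is precisely the associated $e$-monomial read as an affine function of the free variables $(e_1,\dots,e_n)$. Consequently, restricted to $\inte C$, the minimal representation of $p$ and the constructed $e$-expression are, term by term, the same minimum of affine functions under the isomorphism $L$.

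Finally I would transfer minimality. Assuming the representation of $p$ is minimal, the sorted representative $f_O$ of each orbit $O$ is the strict minimum of $p$ at some point; strictness forces that point into $\inte C$, since on $\partial C$ any coordinate tie not matched by an equal pair of exponents lets a different rearrangement of $O$ tie $f_O$. At such an interior point $x$ we therefore have $f_O(x)<f_{O'}(x)$ for every other orbit $O'$, and applying $L$ gives $g_O(\epsilon)<g_{O'}(\epsilon)$ for all $O'\ne O$ at $\epsilon=L(x)\in\inte D\subseteq\RR^n$. This exhibits, for each monomial of the $e$-expression, a point of $\RR^n$ at which it alone attains the minimum, so the $e$-expression is minimal, as claimed. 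I expect the main obstacle to be exactly the step just sketched: matching each monomial of the minimal representation to the sorted representative of its orbit and verifying that strict minimality can occur only in the open chamber $\inte C$; once this is in place, everything downstream is a clean transfer along the linear isomorphism $L$.
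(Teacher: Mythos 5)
Your proposal is correct and follows essentially the same route as the paper: both identify, on the sorted chamber $C=\{x_1\le\cdots\le x_n\}$, each $e$-monomial produced by the algorithm with the sorted representative of its orbit via the partial-sum substitution $e_j = x_1+\cdots+x_j$, and then transfer strict minimality along this identification (the paper runs the argument as a contradiction from an assumed redundant $e$-term, you argue the forward direction). The one imprecision is the claim that strictness forces the witness point into $\inte C$ --- a tie $x_i=x_{i+1}$ matched by equal exponents is harmless and can leave the point on $\partial C$ --- but since your final step only needs $L(x)\in\RR^n$ and the identification holds on the closed chamber, nothing breaks.
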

\begin{proof}
Let $p$ be a symmetric tropical polynomial in $n$ variables. Its minimal representation is of the form
\[
\bigoplus_{j} \operatorname{MinSym}(a_j \odot  x_n^{i_n^j + \ldots + i_1^j-k}
 x_{n-1}^{i_n^j + \ldots + i_2^j-k} \odot \ldots  \odot x_2^{i_n^j + i_{n-1}^j-k} \odot x_1^{i_n^j-k}),
\]
where $i_1^j, \ldots, i_n^j$ are all positive integers.

Applying the algorithm to $p$, taking minimal symmetrizations at every step, produces 
\[
\bigoplus_{j} a_j \odot e_n^{i_n^j-k} \odot e_{n-1}^{i_{n-1}^j} \odot \ldots \odot e_1^{i_1^j}.
\]
Assume now that this expression in $e_1, \ldots, e_n$ is not minimal. This means that a $j_0$ exists and for any $(x_1, \ldots, x_n)$, an $l$ such that 
\[
 (a_{j_0} \odot e_n^{i_n^{j_0}-k} \odot e_{n-1}^{i_{n-1}^{j_0}} \odot \ldots \odot e_1^{i_1^{j_0}}) (x_1, \ldots, x_n) \geq  (a_{l} \odot e_n^{i_n^{l}-k} \odot e_{n-1}^{i_{n-1}^{l}} \odot \ldots \odot e_1^{i_1^{l}})(x_1, \ldots, x_n). 
\] 
Without loss of generality we may assume ${x_1 \leq x_2 \leq \ldots \leq x_n}$. This implies that a $j_0$ exists and for any $(x_1, \ldots, x_n)$ an $l$ so that 
\[
\begin{array}{lcl}
a_{l} \odot x_1^{i_n^{l}+ i_{n-1}^{l} + \ldots + i_1^{l}-k} \odot\ldots \odot x_n^{i_n^{l}-k} &=& a_{l} \odot (x_1\odot \ldots \odot x_n)^{i_n^{l}-k} \odot \ldots \odot x_1^{i_1^{l}}\\
&=& (a_{l} \odot e_n^{i_n^{l}-k} \odot e_{n-1}^{i_{n-1}^{l}} \odot \ldots \odot e_1^{i_1^{l}})(x_1, \ldots, x_n)\\
&\leq& (a_{j_0} \odot e_n^{i_n^{j_0}-k} \odot e_{n-1}^{i_{n-1}^{j_0}} \odot \ldots \odot e_1^{i_1^{j_0}}) (x_1, \ldots, x_n)\\
& = & a_{j_0} \odot (x_1\odot\ldots \odot x_n)^{i_n^{j_0}-k} \odot \ldots \odot x_1^{i_1^{j_0}} \\
&=& a_{j_0} \odot x_1^{i_n^{j_0}+ i_{n-1}^{j_0} + \ldots + i_1^{j_0}-k} \odot\ldots \odot x_n^{i_n^{j_0}-k}. \\
\end{array}
\] 
It follows that a term in the original expression must have been redundant. This is a contradiction.
\end{proof}

The following corollary is a tropical polynomial analogoue of the Fundamental Theorem of Symmetric Polynomials.
\begin{corollary}[Fundamental Theorem of Symmetric Tropical Polynomials]
Every symmetric tropical polynomial in $\operatorname{Trop}[x_1, x_2, \ldots, x_n]$ with monomials whose powers are nonnegative can be written as a tropical polynomial with monomials whose powers are nonnegative in the elementary symmetric tropical polynomials $e_1, \ldots, e_n$. If we apply the algorithm used to prove Theorem~\ref{sym} to the minimal representation of a tropical polynomial with monomials whose powers are nonnegative, then the expression in $e_1, \ldots, e_n$ is also minimal.
\end{corollary}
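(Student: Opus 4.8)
The plan is to recognize this corollary as the restriction of Theorem~\ref{sym} and Corollary~\ref{unique} to the situation where no negative powers — in particular no factor of $e_n^{-1}$ — ever enter the computation. So rather than proving anything new, I would trace the existing algorithm and check that nonnegativity of exponents is preserved throughout.

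For the existence half, let $p$ be a symmetric tropical polynomial all of whose monomials have nonnegative powers. I would run exactly the procedure from the proof of Theorem~\ref{sym}: rewrite $p$ via Lemma~\ref{refGstep1} as a finite $\oplus$ of symmetrizations of monomials, then express each symmetrization through Lemma~\ref{Step2}. The point to verify is that this never produces a negative power of any $e_i$. In the inductive step of Lemma~\ref{Step2} one uses Lemma~\ref{Step1} to factor
\[
\operatorname{Sym}(x_1^{i_{j_1}} \odot \cdots \odot x_k^{i_{j_k}}) = e_k^b \odot \operatorname{Sym}(x_1^{i_{j_1}-b} \odot \cdots \odot x_k^{i_{j_k}-b}), \qquad b = \min(i_{j_1}, \ldots, i_{j_k}) > 0.
\]
Since $b$ is the minimum, the shifted exponents $i_{j_r}-b$ are again nonnegative, so the induction stays inside the class of monomials with nonnegative powers, and each extracted $e_k$ carries a strictly positive power. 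Hence the output lies in the sub-semiring generated by $e_1, \ldots, e_n$ with nonnegative powers. The factor $e_n^{-1}$ entered the proof of Theorem~\ref{sym} only in its final paragraph, where a general $q$ is written as $q\,e_n^j \odot (e_n^j)^{-1}$ to clear negative powers; that step is vacuous here, i.e. one may take $k=0$.

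For minimality I would specialize Corollary~\ref{unique} to $k=0$. Writing the minimal representation of $p$ as $\bigoplus_j \operatorname{MinSym}(a_j \odot x_n^{i_n^j + \cdots + i_1^j} \cdots x_1^{i_n^j})$ with all exponents nonnegative, the algorithm outputs $\bigoplus_j a_j \odot e_n^{i_n^j} \odot \cdots \odot e_1^{i_1^j}$. The contradiction argument of Corollary~\ref{unique} then applies verbatim with $k=0$: if this expression in the $e_i$ failed to be minimal, there would be an index $j_0$ whose term is dominated, for every $(x_1,\ldots,x_n)$, by some other term; substituting the piecewise-linear values of the $e_i$ under the ordering $x_1 \le \cdots \le x_n$ turns this domination into a domination among the original monomials, forcing a redundant term in the minimal representation of $p$ — contrary to minimality.

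I do not anticipate a genuine obstacle, since all the real work is carried by Lemma~\ref{Step1}, Lemma~\ref{Step2}, and Corollary~\ref{unique}. The one place demanding care is the bookkeeping in the existence half: I must confirm that nonnegativity of the monomial exponents is an invariant of every inductive step, and therefore that $k=0$ may be taken throughout, so that the clearing-of-denominators step is never invoked and no $e_n^{-1}$ appears in the final expression.
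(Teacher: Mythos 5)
Your proposal is correct and matches the paper's intent: the corollary is stated without a separate proof precisely because it follows by tracing the algorithm of Theorem~\ref{sym} and Corollary~\ref{unique} and observing, as you do, that Lemma~\ref{Step1} only ever subtracts the minimum exponent $b$ (so exponents stay nonnegative and each extracted $e_k$ has positive power), hence the $e_n^{-1}$ clearing step is never invoked and the minimality argument applies with $k=0$.
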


\section{Symmetric Rational Tropical Functions}

\begin{definition}
A rational tropical function $r \in \operatorname{RTrop}[x_1, x_2, \ldots, x_n]$ is symmetric if 
\[
r(x_1,\ldots, x_n) = r(x_{\pi(1)},\ldots ,x_{\pi(n)})
\]
for all permutations $\pi \in S_n$.
\end{definition}
We denote the algebra of symmetric rational tropical functions by $\operatorname{RTrop}[x_1, x_2, \ldots, x_n]^{S_n}$. We can extend $\operatorname{Sym}$ to $\operatorname{RTrop}[x_1, x_2, \ldots, x_n]$:
\[
\begin{array}{llcl}
\operatorname{Sym} \colon& \operatorname{RTrop}[x_1, \ldots, x_n]& \to &\operatorname{RTrop}[x_1, \ldots, x_n]^{S_n} \\
&&&\\
&  r(x_1, \ldots, x_n) &\mapsto& \bigoplus_{\pi\in S_n} r(x_{\pi(1)}, x_{\pi(2)}, \ldots, x_{\pi(n)}).
\end{array}
\]
The operator $\operatorname{Sym}$ is well-defined, additive, and commutes with tropical multiplication. A rational tropical function $r$  is symmetric if and only if $\operatorname{Sym}(r) = r$. 

\begin{theorem}\label{maxmintheorem}
Every symmetric rational tropical function function in $\operatorname{RTrop}[x_1, x_2, \ldots, x_n]$ can be written as a  rational tropical function in the elementary symmetric tropical polynomials $e_1, \ldots, e_n$.
\end{theorem}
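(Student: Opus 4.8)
The plan is to transport the classical invariant-theory argument---that invariant rational functions are quotients of invariant polynomials---into the min-plus setting. The obstruction to a naive approach is that if $r = p \odot q^{-1}$ is symmetric, the polynomials $p$ and $q$ individually need not be symmetric, so Theorem~\ref{sym} cannot be applied to them directly. The device that resolves this is to replace $q$ by a symmetric denominator obtained as the tropical \emph{product} over the group, which in turn forces the numerator to be symmetric.

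First I would fix a representation $r = p \odot q^{-1}$ with $p$ a tropical polynomial expression and $q$ an $\RR$-tropical polynomial expression. I would then form
\[
Q = \bigodot_{\pi \in S_n} q(x_{\pi(1)}, \ldots, x_{\pi(n)}),
\]
the tropical product (that is, the ordinary sum) of all permuted copies of $q$. Reindexing by $\pi \mapsto \tau \pi$ permutes the factors of $Q$, so $Q$ is symmetric; and since each permuted copy of $q$ is again $\RR$-valued, $Q$ is an $\RR$-tropical polynomial. This $Q$ is the tropical analogue of the classical norm $\prod_{\sigma} \sigma(q)$, with the product $\prod$ replaced by the tropical product $\bigodot$, and it is essential that one uses this multiplicative symmetrization rather than the additive operator $\operatorname{Sym}$.

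Next I would set $P = r \odot Q$. Cancelling the factor $q$ against $q^{-1}$ gives
\[
P = p \odot \bigodot_{\pi \neq \operatorname{id}} q(x_{\pi(1)}, \ldots, x_{\pi(n)}),
\]
which is a tropical product of polynomial expressions and hence a tropical polynomial. As a function $P$ is the $\odot$-product of the symmetric functions $r$ and $Q$, so $P$ is symmetric. Thus $r = P \odot Q^{-1}$ is now a quotient of two \emph{symmetric} tropical polynomials.

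Finally I would apply Theorem~\ref{sym} to express each of $P$ and $Q$ as a tropical polynomial in $e_1, \ldots, e_n$ and $e_n^{-1}$. Since inverses are available in the rational function semiring, each such expression is in particular a rational tropical function in $e_1, \ldots, e_n$, and therefore so is $r = P \odot Q^{-1}$. I expect the only genuine difficulty to be conceptual---identifying the group-product symmetrization of the denominator as the right construction---after which Theorem~\ref{sym} does all of the substantive work and the remaining steps are routine verifications that $Q$ and $P$ have the claimed properties.
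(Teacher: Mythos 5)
Your proof is correct, but it takes a genuinely different route from the paper's. You use the multiplicative (norm-style) symmetrization $Q=\bigodot_{\pi\in S_n}q(x_{\pi(1)},\ldots,x_{\pi(n)})$, absorb the surplus factors into the numerator to get $P=p\odot\bigodot_{\pi\neq\operatorname{id}}q(x_{\pi(1)},\ldots,x_{\pi(n)})$, and then invoke Theorem~\ref{sym} for the two symmetric polynomials $P$ and $Q$; all the verifications you list (that $Q$ is symmetric and $\RR$-valued, that $P$ is a tropical polynomial and symmetric as a product of symmetric functions) do go through. The paper instead uses the additive symmetrization $\operatorname{Sym}$: from $p(x_{\pi(1)},\ldots,x_{\pi(n)})\odot q(x_1,\ldots,x_n)=p(x_1,\ldots,x_n)\odot q(x_{\pi(1)},\ldots,x_{\pi(n)})$ it tropically sums over $\pi$ and uses distributivity to get $\operatorname{Sym}(p)\odot q=p\odot\operatorname{Sym}(q)$, hence $r=\operatorname{Sym}(p)\odot\operatorname{Sym}(q)^{-1}$. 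The two reductions trade off as follows: your norm construction is the verbatim transplant of the classical field-theoretic argument and needs no cross-multiplication identity, but it inflates degrees by a factor of $n!$ in the denominator; the paper's additive version keeps $\operatorname{Deg}\operatorname{Sym}(p)=\operatorname{Deg}p$ and $\operatorname{Deg}\operatorname{Sym}(q)=\operatorname{Deg}q$, and it works tropically precisely because $\operatorname{Sym}(q)$, being a minimum of real-valued functions, can never degenerate the way an additive average $\sum_{\sigma}\sigma(q)$ can vanish in the classical setting. Both reductions end by citing Theorem~\ref{sym}, so the substance is the same; only the symmetrized numerator--denominator pair differs.
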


\begin{proof}
Any rational tropical function $r$ may be written as
\[
r= p \odot q^{-1},
\]
where $p$ and $q$ are in $\operatorname{Trop}[x_1, x_2, \ldots, x_n]$ and whose monomials all have nonnegative powers. 

Let $(x_1, \ldots, x_n) \in \RR^n$. Since $r$ is symmetric, 
\[
p(x_{\pi(1)}, \ldots, x_{\pi(n)}) \odot q(x_1, \ldots, x_n) = p(x_1, \ldots, x_n) \odot q(x_{\pi(1)}, \ldots, x_{\pi(n)})
\]
for all $\pi\in S_n$. Tropically summing over $\pi\in S_n$ gives
\[
\oplus_{\pi\in S_n}  (p(x_{\pi(1)}, \ldots, x_{\pi(n)}) \odot q(x_1, \ldots, x_n))= \oplus_{\pi\in S_n} ( p(x_1, \ldots, x_n) \odot q(x_{\pi(1)}, \ldots, x_{\pi(n)})).
\]
By distributivity,
\[
(\oplus_{\pi\in S_n}p(x_{\pi(1)}, \ldots, x_{\pi(n)}) ) \odot q(x_1, \ldots, x_n)= p(x_1, \ldots, x_n) \odot  (\oplus_{\pi\in S_n}q(x_{\pi(1)}, \ldots, x_{\pi(n)})).
\]
Since this holds for all ${(x_1, \ldots, x_n) \in \RR^n}$,
\[
\operatorname{Sym}(p) \odot q= p \odot  \operatorname{Sym}(q),
\]
and consequently
\[
p\odot q^{-1} = \operatorname{Sym}(p) \odot \operatorname{Sym}(q)^{-1}.
\]
By Theorem \ref{sym} $\operatorname{Sym}(q)$ and $\operatorname{Sym}(p)$ are tropical polynomials in $e_1, \ldots, e_n$. Consequently $r$ is a rational tropical function in $e_1, \ldots, e_n$. 
\end{proof}

\section{$r$-Symmetric Tropical Polynomials}\label{multipoly}
A tropical polynomial in $n$ variables is symmetric if it is invariant under the action of $S_n$ that permutes the variables. We can generalize this definition as follows: a tropical polynomial in $nr$ variables, divided into n blocks of $r$ variables each, is $r$-symmetric if it is invariant under the action of $S_n$ that permutes the blocks while preserving the order of the variables within each block.

We state the relevant results for the case when $r=2$, but by induction we can prove similar statements for a general $r$ (with $r=2$ as the base case). We focus on $r=2$ because persistence barcodes, persistence analogoues of Betti numbers, are collections of intervals. Each interval is given as a point $(x, y)$ and represents a feature which is `born' at $x$ and which `dies' at $y$. Since the order of intervals does not matter, we must identify functions symmetric with respect to the action of $S_n$ on $(\RR^2)^n$ that permutes pairs. 

Fix $n$. Let the symmetric group $S_n$ act on the matrix of indeterminates
\[
X = \begin{pmatrix} 
x_{1, 1} & x_{1, 2} \\
x_{2, 1} & x_{2, 2} \\
\vdots & \vdots \\
x_{n, 1} & x_{n, 2} \\
\end{pmatrix}
\]
by left multiplication. We want to find a generating set for the subset of $\operatorname{Trop}[x_{1, 1}, x_{1, 2}, \ldots, x_{n, 2}]$ that is invariant under the action of $S_n$ described above.

\begin{definition}
A tropical polynomial $p \in \operatorname{Trop}[x_{1, 1}, x_{1, 2}, \ldots, x_{n, 2}]$  is  2-symmetric if 
\[
p(x_{1, 1}, x_{1, 2}, \ldots, x_{n, 1}, x_{n, 2}) = p(x_{\pi(1), 1}, x_{\pi(1), 2}, \ldots , x_{\pi(n), 1}, x_{\pi(n), 2})
\]
for every permutation $\pi \in S_n$.
\end{definition}

Given a tropical monomial in the variables $x_{1, 1}, x_{1, 2}, \ldots, x_{n, 2}$, we construct its \emph{exponent matrix} from the matrix $X$ by replacing each variable by its exponent. 

\begin{example}
Let $n=2$. The exponent matrix of $x_{1,1} \odot x_{2,2}$ is 
\[
\begin{pmatrix} 
1& 0 \\
0& 1\\
\end{pmatrix}.
\]
\end{example}

We define a \emph{symmetrization} map with respect to the row permutation action of $S_n$:
\[
\begin{array}{lcl}
 \operatorname{Trop}[x_{1, 1}, x_{1, 2}, \ldots, x_{n, 2}]& \to &\operatorname{Trop}[x_{1, 1}, x_{1, 2}, \ldots, x_{n, 2}]^{S_n} \\
&&\\
 p(x_{1, 1}, x_{1, 2},  \ldots, x_{n, 1}, x_{n, 2})& \mapsto & \bigoplus_{\pi\in S_n} p(x_{\pi(1), 1},  x_{\pi(1), 2}, \ldots, x_{\pi(n), 1}, x_{\pi(n), 2}).
\end{array}
\]
We denote this map by $\operatorname{Sym}_2$.

\begin{example}
Let $n=2$. The symmetrization of $x_{1,1} \odot x_{2,2}$ is 
\[
\operatorname{Sym}_2(x_{1,1} \odot x_{2,2}) = x_{1,1} \odot x_{2,2}  \oplus x_{2,1} \odot x_{1,2} .
\]
\end{example}

\begin{proposition}\label{multiprop}
Let $p(x_{1, 1}, \ldots, x_{n, 2}), q(x_{1, 1}, \ldots, x_{n, 2})\in \operatorname{Trop}[x_{1, 1},\ldots,  x_{n, 2}]$. Then:
\begin{enumerate}
\item
$\operatorname{Sym}_2(p\oplus q)(x_{1, 1}, \ldots, x_{n, 2}) = \operatorname{Sym}_2(p)(x_{1, 1},  \ldots, x_{n, 2}) \oplus \operatorname{Sym}_2(q)(x_{1, 1},  \ldots, x_{n, 2})$,
\item
$a \odot \operatorname{Sym}_2(p)(x_{1, 1}, \ldots, x_{n, 2})= \operatorname{Sym}_2(a \odot p)(x_{1, 1},  \ldots, x_{n, 2})$.
\item $p \textrm{ is 2-symmetric} \Leftrightarrow \operatorname{Sym}_2(p) = p.$
\end{enumerate}
\end{proposition}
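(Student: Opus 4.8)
The plan is to observe that all three parts are the $2$-symmetric analogues of Propositions~\ref{properties} and~\ref{iffsym}, and that the earlier proofs carry over essentially verbatim once the single-index permutation action on $x_1, \ldots, x_n$ is replaced by the block-permutation action on the rows of $X$. The only structural difference is bookkeeping: a permutation $\pi \in S_n$ now simultaneously relabels the pair $(x_{i, 1}, x_{i, 2})$ rather than a single variable, but since $\operatorname{Sym}_2$ still averages tropically over the same group $S_n$, none of the underlying semiring or group-theoretic arguments change.

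For part (1), I would unwind the definition of $\operatorname{Sym}_2(p \oplus q)$ as $\bigoplus_{\pi \in S_n} (p \oplus q)(x_{\pi(1), 1}, \ldots, x_{\pi(n), 2})$, note that evaluation commutes with $\oplus$ so each summand splits as $p(x_{\pi(1), 1}, \ldots) \oplus q(x_{\pi(1), 1}, \ldots)$, and then use commutativity and associativity of $\oplus$ to regroup the resulting double family into $\operatorname{Sym}_2(p) \oplus \operatorname{Sym}_2(q)$. Part (2) is immediate from the distributive law $a \odot (b \oplus c) = a \odot b \oplus a \odot c$: pulling the scalar $a$ through the tropical sum over $\pi$ turns $a \odot \operatorname{Sym}_2(p)$ into $\bigoplus_{\pi \in S_n} (a \odot p(x_{\pi(1), 1}, \ldots)) = \operatorname{Sym}_2(a \odot p)$.

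For part (3), I would argue both implications as in Proposition~\ref{iffsym}. For the forward direction, $2$-symmetry of $p$ means $p(x_{\pi(1), 1}, \ldots) = p(x_{1, 1}, \ldots)$ for every $\pi$, so every term in the tropical sum defining $\operatorname{Sym}_2(p)$ is equal, and idempotency of $\oplus$ (that is, $\min(a, \ldots, a) = a$) collapses the sum to $p$. For the converse, the key point to verify is that $\operatorname{Sym}_2(p)$ is automatically $2$-symmetric: for any $\sigma \in S_n$, substituting the $\sigma$-permuted rows into $\operatorname{Sym}_2(p)$ reindexes the tropical sum by $\pi \mapsto \pi \sigma$, which is a bijection of $S_n$, so the value is unchanged; hence $p = \operatorname{Sym}_2(p)$ forces $p$ to be $2$-symmetric.

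I do not expect a genuine obstacle, since every step reduces to a semiring axiom (commutativity, associativity, distributivity, idempotency) together with the fact that $\operatorname{Sym}_2$ sums over a group. The only place demanding a moment's care is the reindexing $\pi \mapsto \pi \sigma$ in the converse of (3), where one must confirm that permuting blocks of the matrix $X$ corresponds to composing permutations in $S_n$ rather than permuting variables within a block; this is precisely where the order-preserving-within-each-block hypothesis on the action is used.
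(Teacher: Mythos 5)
Your proof is correct and follows essentially the same route as the paper, which simply notes that the argument is identical to the earlier Propositions~\ref{properties} and~\ref{iffsym} with the variable-permutation action replaced by the block-permutation action. Your explicit verification that $\operatorname{Sym}_2(p)$ is $2$-symmetric via the reindexing $\pi \mapsto \pi\sigma$ fills in exactly the step the paper leaves implicit.
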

\begin{proof}
This proof is similar to the proofs of Propositions \ref{properties} and \ref{iffsym}.
\end{proof}

We want to identify an equivalent of elementary symmetric tropical polynomials in this setting. Let 
\[
\mathscr{E}_n = \left\{ \begin{pmatrix} 
e_{1, 1} & e_{1, 2} \\
e_{2, 1} & e_{2, 2} \\
\vdots & \vdots \\
e_{n, 1} & e_{n, 2} \\
\end{pmatrix} \neq [0]_n^2 \,\mid \, e_{i, j} \in \{0, 1\} \textrm{ for }i=1,2,\ldots, n, 
\textrm{ and } j=1,2  \right\}.
\]
Each matrix $E\in \mathscr{E}_n$ determines a tropical monomial $P(E)$. 
\begin{example}
Let $n=3$. If
\[E=
\begin{pmatrix} 
1& 0 \\
1 & 0 \\
0 & 1 \\
\end{pmatrix},
\]
then $P(E) =x_{1, 1} \odot x_{2, 1} \odot x_{3, 2}$.
\end{example}

We denote  the set of orbits under the row permutation action on $\mathscr{E}_n$ by $\mathscr{E}_n/{S_n}$. Each orbit $\{E_1, E_2, \ldots E_m\}$ determines a 2-symmetric tropical polynomial 
\[P(E_1)\oplus P(E_2)\oplus \ldots \oplus P(E_m).
\]

\begin{definition}
We call 2-symmetric tropical polynomials that arise from orbits $\mathscr{E}_n/{S_n}$ elementary. We let $e_{(e_{1, 1}, e_{1, 2}), \ldots, (e_{n, 1}, e_{n, 2}) }$ denote the tropical polynomial that arises from the orbit
\[
\left[\begin{pmatrix} 
e_{1, 1} & e_{1, 2} \\
e_{2, 1} & e_{2, 2} \\
\vdots & \vdots \\
e_{n, 1} & e_{n, 2} \\
\end{pmatrix}\right].
\]
\end{definition}

\begin{example}
Let $n=2$. The set of orbits under the $S_2$ action is
\[
\mathscr{E}_2/S_2 = \left\{
\begin{array}{c}
\left[ \begin{pmatrix} 
1 & 1 \\
1 & 1 \\
\end{pmatrix}\right]\,,
\left[\begin{pmatrix} 
1 & 0 \\
1 & 1 \\
\end{pmatrix}\right],\,
\left[\begin{pmatrix} 
1 & 1 \\
0 & 1 \\
\end{pmatrix}\right],\,
\left[\begin{pmatrix} 
0 & 0 \\
1 & 1 \\
\end{pmatrix}\right],\\
\\

\left[\begin{pmatrix} 
1 & 0 \\
1 & 0 \\
\end{pmatrix}\right]
\left[\begin{pmatrix} 
1 & 0 \\
0 & 1 \\
\end{pmatrix}\right],
\left[\begin{pmatrix} 
0 & 1 \\
0 & 1 \\
\end{pmatrix}\right],\,
\left[\begin{pmatrix} 
0 & 1 \\
0 & 0 \\
\end{pmatrix}\right],\,
\left[\begin{pmatrix} 
1 & 0 \\
0 & 0 \\
\end{pmatrix}\right]
\end{array}
  \right\}.
\]
A few examples of elementary 2-symmetric tropical polynomials are:
\[\begin{array}{rcl}
e_{[(1,1)(1,1)]} &=&x_{1,1} \odot x_{1,2} \odot x_{2,2}  \odot x_{2,1}, 
\\
e_{[(1,0)(1,1)]} &=& x_{1,1} \odot x_{2,1} \odot x_{2,2} \oplus x_{1,1} \odot x_{1,2} \odot x_{2,1},
\\
e_{[(1,1)(0,1)]} &=& x_{1,1} \odot x_{1,2} \odot x_{2,2} \oplus x_{1,2} \odot x_{2,1} \odot x_{2,2},
\\
e_{[(1,0)(0,0)]} &=& x_{1,1} \oplus x_{2,1}.
\end{array}
\]
For simplicity we write $e_{[(1,0)]}$ instead of $e_{[(1,0)(0,0)]}$ when $n$ is clear from the context. Similarly $e_{[(1,1)^2]}$ represents $e_{[(1,1)(1,1)]}$.
\end{example}

Now we show that elementary 2-symmetric tropical polynomials give coordinates on $\RR^{2n}/ S_n$.
\begin{proposition}
Let  $[(x_{1}, y_{1}, \ldots, x_{n}, y_{n})]$ and $[(x_{1}', y_{1}', \ldots, x_{n}', y_{n}')]$ be two orbits under the row permutation action on $\RR^{2n}$. If 
\[
e([(x_{1}, y_{1}, \ldots, x_{n}, y_{n})]) = e([(x_{1}', y_{1}', \ldots, x_{n}', y_{n}')])
\]
for all elementary 2-symmetric tropical polynomials $e$, then 
\[
[(x_{1}, y_{1}, \ldots, x_{n}, y_{n})] = [(x_{1}', y_{1}', \ldots, x_{n}', y_{n}')].
\]
\end{proposition}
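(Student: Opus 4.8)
The plan is to reduce the statement to the one–variable separation result already proved earlier in the paper (the Proposition establishing that $e_1,\dots,e_n$ separate the $S_n$-orbits on $\RR^n$), and then to account for the coupling between the two columns using the genuinely mixed elementary polynomials. Throughout I identify an orbit with the multiset of rows $(x_1,y_1),\dots,(x_n,y_n)$ of the coordinate matrix, and I record that an elementary $2$-symmetric polynomial attached to a pattern with $a$ rows of type $(1,1)$, $b$ rows of type $(1,0)$ and $c$ rows of type $(0,1)$ evaluates at such a point to
\[
\min\Bigl(\sum\nolimits_{i\in A}(x_i+y_i)+\sum\nolimits_{i\in B}x_i+\sum\nolimits_{i\in C}y_i\Bigr),
\]
the minimum being taken over all disjoint index sets $A,B,C$ with $|A|=a$, $|B|=b$, $|C|=c$.

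First I would recover the one–dimensional marginals. The elementary polynomials whose exponent matrices have second column zero are exactly the elementary symmetric tropical polynomials $e_1,\dots,e_n$ in the first–column variables $x_{1,1},\dots,x_{n,1}$, and symmetrically for the first column. Restricting the hypothesis to these and invoking the one–variable separation result forces the sorted multiset $\{x_i\}$ and the sorted multiset $\{y_i\}$ of the two orbits to coincide. After reordering I may therefore assume the two orbits share the same multiset of $x$-coordinates and the same multiset of $y$-coordinates, and the whole problem reduces to showing that the way the $x$'s are matched with the $y$'s is determined.

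To pin down this matching I would work with the subfamily of elementary polynomials using only the labels $(1,1)$ and $(1,0)$, together with its mirror image using $(1,1)$ and $(0,1)$. For such a pattern the optimisation over which rows carry $(1,1)$ is explicit: choosing a set $T$ of $a+b$ rows and awarding the extra $y$ to the $a$ rows of $T$ of smallest $y$-value, the polynomial evaluates to
\[
\min_{|T|=a+b}\bigl(\sum\nolimits_{i\in T}x_i+\sigma_a(\{y_i:i\in T\})\bigr),
\]
where $\sigma_a$ denotes the sum of the $a$ smallest entries. The intended mechanism is to read off the matching from how these quantities behave as $a$ increases with $a+b$ held fixed: the successive differences should expose, one at a time, the $y$-value that the optimal set $T$ attaches to a prescribed block of small $x$-values, which—since the $x$-marginal is already known—determines a pair. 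Iterating over the admissible ranges of $a$ and $a+b$ would recover the full multiset of pairs, with the mirror family cross-checking the symmetric statement in the $y$-coordinate. (The one–column families and the coordinate–sum family alone do \emph{not} determine the matching, which is why these mixed polynomials cannot be dispensed with.)

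The main obstacle is precisely this last step. The minima defining the mixed polynomials are attained at subsets $T$ that couple the two coordinates, and the optimal $T$ can change combinatorial type as the parameters vary, so no single elementary polynomial isolates a single coordinate and the differences above need not vary in an obviously monotone way; repeated values among the $x_i$ or $y_i$ compound this by making the optimal configuration non-unique. I expect to handle these degeneracies in one of two ways: either by a careful inductive bookkeeping that peels off one extremal pair at a time, justifying at each stage that both of its coordinates are forced and that the remaining data still satisfies the hypotheses; or by first proving the separation for points in general position, where all the relevant minima are uniquely attained and the recovery formula is clean, then exhibiting an explicit continuous reconstruction that inverts the invariant map on this dense set and extending the conclusion to all points by continuity of the elementary polynomials.
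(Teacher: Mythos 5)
There is a genuine gap. The first half of your argument — restricting to the pure-column patterns $(1,0)^k$ and $(0,1)^k$ and invoking the earlier one-variable separation result to recover the sorted multisets $\{x_i\}$ and $\{y_i\}$ — is exactly what the paper does, and your observation that the marginals and the total sum alone cannot determine the pairing (so the mixed patterns are indispensable) is correct. But the entire content of the proposition lies in the second step, recovering the matching between the two columns, and there you stop at a plan: you propose reading the matching off from successive differences of
\[
\min_{|T|=a+b}\Bigl(\sum\nolimits_{i\in T}x_i+\sigma_a(\{y_i:i\in T\})\Bigr)
\]
as $a$ varies, then immediately concede that the optimal $T$ can change type, that the differences need not behave monotonically, and that ties make the optimum non-unique, and you offer two possible repair strategies without executing either. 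A proof must actually carry out one of them; as written, the claim that these quantities ``expose, one at a time, the $y$-value attached to a prescribed block of small $x$-values'' is unverified and is precisely where the difficulty sits. The continuity fallback is also not free: injectivity on a dense set does not by itself propagate to all points, so you would need to actually construct the continuous reconstruction map, which is no easier than the combinatorial argument.

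For comparison, the paper carries out the ``careful inductive bookkeeping'' you mention as your first option. It fixes representatives sorted by $x$, and within blocks of equal $x$ sorted by $y$, and then proves $y_{\pi(m)}=y_{\rho(m)}$ by induction on $m$. At stage $m$ it sets $I=\{k \mid y_k<y_{\pi(m)}\}$ and evaluates a single, explicitly chosen elementary polynomial whose pattern mixes $(1,1)$, $(1,0)$ and $(0,1)$ in proportions dictated by where $\pi(m)$ sits relative to the already-determined indices $\pi(1),\dots,\pi(m-1)$ (for instance $e_{[(1,1)(0,1)^{|I|}]}$ at the base case, and $e_{[(1,0)^{j}(1,1)^{m-1-j}(0,1)^{\cdots}]}$ in the various inductive cases); comparing the two evaluations yields $y_{\rho(m)}\le y_{\pi(m)}$, and the symmetric argument gives the reverse inequality. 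The choice of $I$ and the tie-breaking convention on representatives are exactly what neutralizes the degeneracies you flag. Your outline points in the right direction but does not yet contain this argument.
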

\begin{proof}
Suppose that $x_{1} \leq x_{2} \leq \ldots \leq x_{n}$, $x_{1}' \leq x_{2}' \leq \ldots \leq x_{n}'$, $y_{1} \leq y_{2} \leq \ldots \leq y_{n}$ and $y_{1}' \leq y_{2}' \leq \ldots \leq y_{n}'$.

Let  $[(x_{1}, y_{\pi(1)}, \ldots, x_{n}, y_{\pi(n)})]$ and $[ (x_{1}', y_{\rho(1)}', \ldots, x_{n}', y_{\rho(n)}')]$ be two orbits under the row permutation action on $\RR^{2n}$ that satisfy 
\[
e([(x_{1}, y_{\pi(1)}, \ldots, x_{n}, y_{\pi(n)})]) = e([ (x_{1}', y_{\rho(1)}', \ldots, x_{n}', y_{\rho(n)}')])
\]
for all elementary 2-symmetric tropical polynomials.

 Applying $e_{[(1,0)]}$, we get $x_{1}=x_{1}'$. Applying $e_{[(1,0), (1,0)]}$, we get
 \[
 x_{1}+x_{2} = x_{1}'+x_{2}'
 \]
 and from here $x_{2}=x_{2}'$ and so on. Finally, applying $e_{[ (1,0)^n ]}$ yields $x_{n}=x_{n}'$. 

We use a similar argument using $e_{[(0,1)]}, e_{[(0,1), (0,1)]}, \ldots, e_{[ (0,1)^n]}$ to show that 
\[
{y_{1} = y_1', y_{2} = y_2', \ldots, y_{n} = y_{n}'}.
\]
It remains to show that the orbits $[(x_{1}, y_{\pi(1)}, \ldots, x_{n}, y_{\pi(n)})]$ and $[ (x_{1}, y_{\rho(1)}, \ldots, x_{n}, y_{\rho(n)})]$ are the same. From the first orbit we choose such $(x_{1}, y_{\pi(1)}, \ldots, x_{n}, y_{\pi(n)})$ that if $x_i = x_{i+1}$ for some $i\in \{1, \ldots, n-1\}$, then $y_{\pi(i)} \leq y_{\pi(i+1)}$. We choose a representative from the second orbit $(x_{1}, y_{\rho(1)}, \ldots, x_{n}, y_{\rho(n)})$ the same way. We must show that $y_{\rho(i)} = y_{\pi(i)}$ for $i=1, \ldots, n$ and we do this by induction.

First we show that $y_{\pi(1)} = y_{\rho(1)}$. 

Let $I = \{k \in \{ 1, 2, \ldots, n \} \,\mid \, y_k < y_{\pi(1)} \}$. 
Using the fact that $x_1 \leq x_2 \leq \ldots \leq x_n$, we evaluate $e_{[ (1,1)(0, 1)^{|I|} ]}$ where $|I|$ is the cardinality of $I$:
\[
e_{[(1,1)(0,1)^{|I|}]}([(x_{1}, y_{\pi(1)}, \ldots, x_{n}, y_{\pi(n)})]) = x_1+ y_{\pi(1)} +  \sum_{k\in I}y_k.
\]
By definition
\[
e_{[(1,1)(0,1)^{|I|}]}([(x_{1}, y_{\rho(1)}, \ldots, x_{n}, y_{\rho(n)})]) = \min( \min_{ l \notin I } (x_l + \sum_{k\in I}y_k + y_{\rho(l)}),  \min_{ l \in I} (x_l + \sum_{k\in I}y_k + y_{\pi(1)})).
\]
If ${y_{\rho(1)} > y_{\pi(1)}}$, then
\[
\min_{ l \notin I } (x_l + \sum_{k\in I}y_k + y_{\rho(l)}) > x_1 + \sum_{k\in I}y_k +  y_{\pi(1)}
\]
and
\[
\min_{ l \in I } (x_l + \sum_{k\in I}y_k + y_{\pi(1)})) > x_1 + \sum_{k\in I}y_k +  y_{\pi(1)}
\]
since $y_{\rho(1)}$ is the minimum among the $y$-coordinates of pairs 
\[
\{ (x_j, y_{\rho(j)})\,\mid \, j\in \{1, 2, \ldots, n \}, x_j = x_1\}.
\]
Therefore ${y_{\rho(1)} > y_{\pi(1)}}$ contradicts the assumption that
\[
e([(x_{1}, y_{\pi(1)}, \ldots, x_{n}, y_{\pi(n)})]) = e([ (x_{1}', y_{\rho(1)}', \ldots, x_{n}', y_{\rho(n)}')])
\]
for all elementary 2-symmetric tropical polynomials. This proves that $y_{\rho(1)} \leq y_{\pi(1)}$.

A similar argument using $J = \{k \in \{ 1, 2, \ldots, n \} \,\mid \, y_k < y_{\rho(1)} \}$ and evaluating $e_{[ (1,1)(0,1)^{|J|} ]}$ shows that $y_{\pi(1)} \leq y_{\rho(1)}$.

We conclude that $y_{\pi(1)} = y_{\rho(1)}$. 

Now suppose that $y_{\pi(s)} = y_{\rho(s)}$ for all $s < m$, where $m$ is a positive integer. We want to show that $y_{\pi(m)} = y_{\rho(m)}$. Let us first suppose that $\pi(i) < \pi(m)$ for $i<m$. We set $I = \{k \in \{ 1, 2, \ldots, n \} \,\mid \, y_k < y_{\pi(m)} \}$. We evaluate at $e_{[ (1,1)^{m-1}(0, 1)^{|I|-m+2} ]}$ where $|I|$ is the cardinality of $I$ and conclude that $y_{\rho(m)} \leq y_{\pi(m)}$. 

Let $\{i_1, \ldots, i_{m-1}\}$ be a permutation of $\{\pi(1), \ldots, \pi(m-1)\}$ for which ${i_1 \leq i_2 \leq \ldots \leq i_{m-1}}$.

Now suppose that $i_{(m-2)} < \pi(m)<i_{(m-1)}$.
Let $I = \{k \in  \{ 1, 2, \ldots, n \}  \,\mid \, y_k < y_{\pi(m)} \}$. 
We evaluate at $e_{[(1,0) (1,1)^{m-2}(0, 1)^{|I|-m+3} ]}$ where $|I|$ is the cardinality of $I$ and conclude that then $y_{\rho(m)} \leq y_{\pi(m)}$. 

We use a similar argument in the following cases
\[
i_{(m-3)} < \pi(m)<i_{(m-2)}, i_{(m-4)} < \pi(m)<i_{(m-3)}, \ldots, i_{1} < \pi(m)<i_{2}
\]
 and conclude that ${y_{\rho(m)} \leq y_{\pi(m)}}$ in each.

Finally, let $\pi(m) < i_1$ and let $I = \{k \in  \{ 1, 2, \ldots, n \} \,\mid \, y_k < y_{\pi(m)} \}$. Applying $e_{[ (1,0)^{m-1}(1,1)(0, 1)^{|I|}]}$ we deduce that $y_{\rho(m)} \leq y_{\pi(m)}$.

So in all cases $y_{\rho(m)} \leq y_{\pi(m)}$. A similar argument holds for the permutation $\rho$. It follows that $y_{\pi(m)} = y_{\rho(m)}$. 
\end{proof}

Unfortunately an equivalent of Theorem~\ref{sym} does not hold for $r$-symmetric tropical polynomials. We show that when $r =2$ and $n = 2$ no finite set of generators exists for the 2-symmetric tropical polynomials.  

\begin{proposition}
$\operatorname{Trop}[x_{1, 1}, x_{1, 2},x_{2, 1}, x_{2, 2}]^{S_2}$ is not finitely generated.
\end{proposition}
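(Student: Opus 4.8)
The plan is to fix an arbitrary finite family $G=\{g_1,\dots,g_N\}$ of $2$-symmetric tropical polynomials and to produce a $2$-symmetric tropical polynomial that is not a tropical polynomial in the $g_i$; since $G$ is arbitrary this shows that no finite set generates. Throughout I would use that a tropical polynomial is a concave piecewise-linear function with integer slopes (a minimum of integer-affine functions), that $\odot$ is ordinary addition and $\oplus$ is pointwise minimum, and that every element of the subsemiring generated by $G$ has the form $\min_\alpha\left(c_\alpha+\sum_i m_{\alpha,i}\,g_i\right)$ with $c_\alpha\in\RR$ and $m_{\alpha,i}\in\ZZ_{\ge 0}$. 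The separating family will be
\[
f_k=\operatorname{Sym}_2\!\left(x_{1,1}\odot x_{1,2}^{\,k}\right)=\min\!\left(x_{1,1}+k\,x_{1,2},\;x_{2,1}+k\,x_{2,2}\right),\qquad k\ge 1,
\]
whose two linear pieces have slopes $(1,k,0,0)$ and $(0,0,1,k)$.

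First I would record the only structural input needed about the generators: a sign constraint on their slopes coming from symmetry together with concavity. If $g$ is any $2$-symmetric tropical polynomial and $p$ is a smooth point with slope $s=\nabla g(p)=(s_1,s_2,s_3,s_4)$, then applying the concave tangent inequality $g(p)+\langle s,\,q-p\rangle\ge g(q)$ at $q=\sigma p$ (where $\sigma$ is the block swap) and using $g(\sigma p)=g(p)$ gives $\langle s,\sigma p-p\rangle\ge 0$, that is
\[
(s_1-s_3)(x_{1,1}-x_{2,1})+(s_2-s_4)(x_{1,2}-x_{2,2})\le 0 .
\]
I would then localize: choose $\bar P$ with $x_{1,2}=x_{2,2}$ and $x_{1,1}<x_{2,1}$, generic so that on the side $\{x_{1,2}>x_{2,2}\}$ near $\bar P$ all of $g_1,\dots,g_N$ are smooth with constant slopes $s^{(i)}$ on one common full-dimensional cell $C^*$ with $\bar P\in\overline{C^*}$. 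Letting points of $C^*$ approach $\bar P$ in the inequality forces, for every $i$, that $s^{(i)}_1\ge s^{(i)}_3$, with equality only if $s^{(i)}_2\le s^{(i)}_4$.

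Next I would suppose $f_k=\min_\alpha\bigl(c_\alpha+\sum_i m_{\alpha,i}g_i\bigr)$ and read off slopes on $C^*$. Near $\bar P$ the region $R_1=\{x_{1,1}+kx_{1,2}<x_{2,1}+kx_{2,2}\}$ meets $C^*$ in a nonempty open set on which $f_k$ is linear with slope $(1,k,0,0)$; at a generic such point the active term has gradient $\sum_i m_i s^{(i)}=(1,k,0,0)$ for some $m_i\in\ZZ_{\ge 0}$. Taking first-minus-third coordinates gives $\sum_i m_i\,(s^{(i)}_1-s^{(i)}_3)=1$, a sum of nonnegative integers equal to $1$, so exactly one index $i_0$ contributes, with $m_{i_0}=1$ and $s^{(i_0)}_1-s^{(i_0)}_3=1$, while every other contributing generator is balanced, $s^{(i)}_1=s^{(i)}_3$ and hence $s^{(i)}_2\le s^{(i)}_4$. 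Taking second-minus-fourth coordinates then gives $k=\sum_i m_i(s^{(i)}_2-s^{(i)}_4)\le s^{(i_0)}_2-s^{(i_0)}_4$, since every other term is $\le 0$. Thus $k\le B:=\max_i\,(s^{(i)}_2-s^{(i)}_4)$, a constant depending only on $G$. For $k>B$ this is a contradiction, so $f_k\notin\langle G\rangle$.

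Since $G$ was an arbitrary finite family, no finite set of $2$-symmetric tropical polynomials generates $\operatorname{Trop}[x_{1,1},x_{1,2},x_{2,1},x_{2,2}]^{S_2}$. The step I expect to be the real obstacle — and the one that makes this tropical phenomenon genuinely different from the classical case — is identifying the correct invariant: the Newton-polytope/slope data alone is useless, because a finite generating set can already realize every slope, so the argument must be localized at the degenerate locus $\{x_{1,2}=x_{2,2}\}$, where symmetry and concavity pin down the signs of the generator slopes and prevent a single bounded generator from carrying the $k$-fold growth. The remaining points (existence of the common smooth cell $C^*$, genericity of $\bar P$, and integrality of the slopes) are routine once that localization is in place.
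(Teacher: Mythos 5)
Your proof is correct, but it reaches the conclusion by a genuinely different route than the paper. The paper first reduces (via distributivity) to the case where the generators are $\operatorname{Sym}_2$ of monomials with exponents bounded by some $d-1$, exhibits the single witness $x_{1,1}^{d}\odot x_{1,2}\oplus x_{2,1}^{d}\odot x_{2,2}$, and argues combinatorially: any tropical product of generators that produces the term $x_{1,1}^{d}\odot x_{1,2}$ must, upon expansion, also produce a cross term such as $x_{1,1}^{a}\odot x_{1,2}\odot x_{2,1}^{d-a}$, and this cross term is not redundant (it wins at some point), contradicting the minimality of the two-term expression. You instead handle an arbitrary finite family $G$ directly and replace the cross-term bookkeeping with a convex-geometric argument: the supergradient inequality for a concave piecewise-linear function, combined with invariance under the block swap, pins down the sign of $s_1-s_3$ (and, in the balanced case, of $s_2-s_4$) for the gradient of every $2$-symmetric generator on a cell abutting the partial diagonal $\{x_{1,2}=x_{2,2}\}$; writing the slope $(1,k,0,0)$ of $f_k=\operatorname{Sym}_2(x_{1,1}\odot x_{1,2}^{k})$ as a nonnegative integer combination of generator slopes then forces $k$ below a bound depending only on $G$. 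The two arguments exploit the same underlying phenomenon --- multiplying symmetrized generators inevitably creates terms mixing the two blocks, and these dominate near the locus where the blocks partially agree (your witnesses are just the transposes of the paper's) --- but yours is more robust: it needs no reduction to monomial generators, no notion of minimal representation, and it defeats all of $f_k$, $k>B$, with one uniform bound. Two points you should make explicit: (i) the hypothesis $m_{\alpha,i}\in\ZZ_{\geq 0}$ is essential and encodes that ``generated'' means closure under $\oplus$ and $\odot$ only (no tropical inverses of the generators), which is the same convention the paper uses; and (ii) the existence of the common cell $C^{*}$ with $\bar P$ in its closure, while routine, can even be bypassed --- it suffices to evaluate the symmetry--concavity inequality at a single smooth point $q$ with $q_{1,1}-q_{2,1}=-1$ and $0<q_{1,2}-q_{2,2}$ smaller than both $1/k$ and the reciprocal of twice the maximal slope of the $g_i$, since integrality of the slopes then yields the same sign conclusions without any limiting argument.
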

\begin{proof}
Suppose a finite set of monomials, $\{ x_{1, 1}^{i_{1, 1}} \odot x_{1, 2}^{i_{1, 2}} \odot x_{2, 1}^{i_{2, 1}} \odot x_{2, 2}^{i_{2, 2}} \}_{i\in I}$, exists such that  
\[
 \operatorname{Sym}_2(I) = \{ \operatorname{Sym}_2(x_{1, 1}^{i_{1, 1}}\odot x_{1, 2}^{i_{1, 2}}\odot x_{2, 1}^{i_{2, 1}}\odot x_{2, 2}^{i_{2, 2}}) \}_{i\in I}
\]
 generates $\operatorname{Trop}[x_{1, 1}, x_{1, 2},x_{2, 1}, x_{2, 2}]^{S_2}$. Since $I$ is finite, an integer $d\leq 1$ exists such that
\[
0 \leq i_{1, 1}, i_{1, 2}, i_{2, 1}, i_{2, 2} \leq d-1
\]
for all $i\in I$.

We show by contradiction that $x_{1, 1}^{d} \odot x_{1, 2} \oplus x_{2, 1}^{d} \odot x_{2, 2}$ is not generated by 
\[
G = \{ \operatorname{Sym}_2(x_{1, 1}^{j_{1, 1}} \odot x_{1, 2}^{j_{1, 2}}\odot x_{2, 1}^{j_{2, 1}} \odot x_{2, 2}^{j_{2, 2}}) \}_{j_{1, 1}, j_{1, 2}, j_{2, 1}, j_{2, 2} \leq d-1},
\]
and consequently not by $\operatorname{Sym}_2(I)$. 

First note that this tropical polynomial expression representing the function is minimal. Suppose $x_{1, 1}^{d} \odot x_{1, 2} \oplus x_{2, 1}^{d} \odot x_{2, 2}$ can be generated by elements of $G$; that is, can be represented by a tropical expression $P$, which is a tropical sum of tropical products of the elements from $G$ (we can expand any expression to one of this form using distributivity). The term $x_{1, 1}^{d} \odot x_{1, 2}$ only appears in $P$ if $P$ contains a tropical product of symmetrizations of monomials $S_1 S_2 \ldots S_n$, $S_i \in G$, where for some $a$, ${0 \leq a < d}$, 
\[
S_1 = \operatorname{Sym}_2 (x_{1, 1}^{a} \odot x_{1, 2}) =x_{1, 1}^{a} \odot x_{1, 2} \oplus x_{2, 1}^{a} \odot x_{2, 2}, \textrm{ and }S_j = \operatorname{Sym}_2 (x_{1, 1}^{a_j}) =x_{1, 1}^{a_j} \oplus x_{2, 1}^{a_j}
\] 
for other $j$ such that $\sum_{j=2}^n a_j=d-a$ (this must hold because $P$ is functionally equivalent to $x_{1, 1}^{d} \odot x_{1, 2} \oplus x_{2, 1}^{d} \odot x_{2, 2}$). 

Expression $P$ contains $x_{2, 1}^{d} \odot x_{2, 2}$ by symmetry.  But it also contains, for example, $x_{1, 1}^{a} \odot x_{1, 2} \odot x_{2, 1}^{d-a}$. By assumption $P$ is functionally equivalent to $x_{1, 1}^{d} \odot x_{1, 2} \oplus x_{2, 1}^{d} \odot x_{2, 2}$, so $x_{1, 1}^{d} \odot x_{1, 2} \oplus x_{2, 1}^{d} \odot x_{2, 2}$ is the minimal expression for $P$ as argued earlier. This implies that
\[
x_{1, 1}^{a} \odot x_{1, 2} \odot x_{2, 1}^{d-a}\geq x_{1, 1}^{d} \odot x_{1, 2} \oplus x_{2, 1}^{d} \odot x_{2, 2}
\]
for all $x_{1, 1}, x_{1, 2}, x_{2, 1}, x_{2, 2} \in \RR$. This is equivalent to
\[
0\geq \min\{(d-a)(x_{1,1} - x_{2, 1}), a(x_{2,1}-x_{1,1}) + x_{2,2}-x_{1,2} \}
\]
at all points, which clearly does not hold. This contradicts our initial assumption and shows that the minimal form of $P$ cannot be $x_{1, 1}^{d} \odot x_{1, 2} \oplus x_{2, 1}^{d} \odot x_{2, 2}$ and in turn implies that $x_{1, 1}^{d} \odot x_{1, 2} \oplus x_{2, 1}^{d} \odot x_{2, 2}$ cannot be generated by $G$.
\end{proof}

\section{$r$-Symmetric Rational Tropical Functions}

\begin{definition}
A rational tropical function $r \in \operatorname{RTrop}[x_{1, 1}, x_{1, 2}, \ldots, x_{n, 1}, x_{n, 2}]$ is \linebreak {2-symmetric} if 
\[
r(x_{1, 1}, x_{1, 2}, \ldots, x_{n, 1}, x_{n, 2}) = r(x_{\pi(1), 1}, x_{\pi(1), 2}, \ldots , x_{\pi(n), 1}, x_{\pi(n), 2})
\]
for all permutations $\pi \in S_n$.
\end{definition}
We denote the algebra of 2-symmetric rational tropical functions by $\operatorname{RTrop}[x_{1, 1}, \ldots, x_{n, 2}]^{S_n}$. 
We can extend $\operatorname{Sym}_2$ to $\operatorname{RTrop}[x_{1, 1}, \ldots, x_{n, 2}]$.
\[
\begin{array}{llcl}
\operatorname{Sym}_2 \colon& \operatorname{RTrop}[x_{1, 1}, \ldots, x_{n, 2}]& \to &\operatorname{RTrop}[x_{1, 1}, \ldots, x_{n, 2}]^{S_n} \\
&&&\\
&  r(x_{1, 1}, x_{1, 2}, \ldots, x_{n, 1}, x_{n, 2}) &\mapsto& \bigoplus_{\pi\in S_n} r(x_{\pi(1), 1}, x_{\pi(1), 2}, \ldots, x_{\pi(n), 1}, x_{\pi(n), 2}).
\end{array}
\]
Operator $\operatorname{Sym}_2$ is well-defined, additive, and commutes with tropical multiplication. A  rational tropical function $r$ is 2-symmetric if and only if $\operatorname{Sym}_2(r) = r$.

\begin{theorem}\label{multimaxmintheorem}
Every symmetric rational tropical function in $\operatorname{RTrop}[x_{1, 1}, \ldots, x_{n, 2}]$ can be written as a  rational tropical function in the elementary {2-symmetric} tropical polynomials.
\end{theorem}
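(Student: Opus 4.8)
The plan is to imitate the reduction used in the proof of Theorem~\ref{maxmintheorem}, pushing an arbitrary $2$-symmetric rational function down to the symmetrizations $\operatorname{Sym}_2$ of single monomials, and then to establish that those symmetrizations are rational in the elementary $2$-symmetric polynomials even though, by the preceding proposition, they need not be \emph{polynomial} in them.

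First I would write a $2$-symmetric $r \in \operatorname{RTrop}[x_{1,1},\ldots,x_{n,2}]$ as $r = p \odot q^{-1}$, where $p,q$ are tropical polynomials with only nonnegative powers. Exactly as before, $2$-symmetry of $r$ gives, for every $\pi \in S_n$ and every point, $p(x_{\pi(1),1},\ldots) \odot q(x_{1,1},\ldots) = p(x_{1,1},\ldots) \odot q(x_{\pi(1),1},\ldots)$. Summing tropically over $\pi$ and using distributivity together with the additivity of $\operatorname{Sym}_2$ (Proposition~\ref{multiprop}) yields $\operatorname{Sym}_2(p)\odot q = p \odot \operatorname{Sym}_2(q)$, hence
\[
r = p \odot q^{-1} = \operatorname{Sym}_2(p) \odot \operatorname{Sym}_2(q)^{-1}.
\]
So $r$ is a tropical quotient of two genuinely $2$-symmetric tropical polynomials, and it suffices to show that every $2$-symmetric tropical polynomial is a rational tropical function in the elementary $2$-symmetric polynomials. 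By Proposition~\ref{multiprop} and the analogue of Lemma~\ref{refGstep1}, each such polynomial is a tropical sum of monomial symmetrizations $\operatorname{Sym}_2(M)$ with nonnegative exponents, and a tropical sum of rational functions is rational; thus the whole theorem reduces to the key lemma that \emph{each $\operatorname{Sym}_2(M)$ is a rational tropical function in the elementary $2$-symmetric polynomials}. This is exactly where the new phenomenon lives: the non-finite-generation result shows $\operatorname{Sym}_2(M)$ is in general \emph{not} a tropical polynomial in the elementaries, so, unlike in Lemmas~\ref{Step1} and~\ref{Step2}, the argument must genuinely invoke tropical division and the operation $\max$.

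To prove the key lemma I would first recover the order statistics of each column as rational functions: the single-column elementaries satisfy $e_{[(1,0)^k]} = x_{(1),1}\odot \cdots \odot x_{(k),1}$ (the sum of the $k$ smallest first coordinates), so each sorted coordinate is $x_{(k),1} = e_{[(1,0)^k]} \odot (e_{[(1,0)^{k-1}]})^{-1}$, and similarly for the second column. Viewing $\operatorname{Sym}_2(M)(x) = \min_{\sigma \in S_n}\sum_i\bigl(a_i\, x_{\sigma(i),1} + b_i\, x_{\sigma(i),2}\bigr)$, where $(a_i,b_i)$ are the rows of the exponent matrix of $M$, as the optimal value of an assignment problem between exponent rows and variable rows, I would peel off elementary factors in the spirit of Lemma~\ref{Step1}. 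The terms that obstructed pure-polynomial generation are precisely the ``cross'' assignments, and these are themselves lower-degree monomial symmetrizations, captured by products of the coupling elementaries. An induction on the exponents of $M$ then produces, at each step, a relation of the form $(\text{product of elementaries}) = \operatorname{Sym}_2(M) \oplus (\text{lower symmetrizations})$, whose lower terms are already expressible in the elementaries by the inductive hypothesis.

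The main obstacle is the final \emph{isolation}: recovering $\operatorname{Sym}_2(M)$ from such a relation amounts to inverting a tropical sum, which is not formally possible. To get around this I would either generate enough independent relations to pin down the value on each linearity region, or invoke the orbit-separation proposition to know a priori that $\operatorname{Sym}_2(M)$ is a well-defined piecewise-linear function of the elementary values, and then assemble it explicitly from the recovered order statistics using $\min$ and $\max$. Checking that this assembly can be made uniform in $n$ and in the exponent matrix of $M$ is the technical heart of the argument, and is the step I expect to be delicate.
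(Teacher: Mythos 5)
Your outline reproduces the paper's strategy almost exactly up to the point where you stop. Writing $r=\operatorname{Sym}_2(p)\odot\operatorname{Sym}_2(q)^{-1}$, reducing the theorem to the claim that each monomial symmetrization $\operatorname{Sym}_2(M)$ is rational in the elementaries, and deriving a relation of the form $e^a\odot\operatorname{Sym}_2(M')=\operatorname{Sym}_2(M)\oplus P$ --- where $a$ is the minimal positive exponent of $M$, $e$ is the elementary polynomial of the support of $M$, $M'$ is $M$ with $a$ subtracted off each supported exponent, and $P$ is the tropical sum of the ``cross'' terms $p_{\rho,\pi}$ --- is precisely the content of Lemma~\ref{multiStep1}. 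The paper organizes the induction by the well-order $>_S$ (total degree, then number of variables occurring, then lex); the key observation is that each cross term has the same degree as $M$ but strictly larger spread, so $\operatorname{Sym}_2(M')$ and all the $\operatorname{Sym}_2(p_{\rho,\pi})$ fall under the inductive hypothesis. Your ``assignment problem'' picture is an accurate description of this.

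The gap is the isolation step, which you explicitly defer, and neither of your two proposed workarounds is carried out, so the proposal does not prove the theorem as written. For comparison: the paper resolves this step by tropical division, asserting $\operatorname{Sym}_2(M)=e^a\odot\operatorname{Sym}_2(M')\odot P^{-1}$ and then expressing $P$ inductively. You should be aware, however, that your stated worry --- that inverting a tropical sum ``is not formally possible'' --- is exactly on target here: from $B=A\oplus C$ one cannot conclude $A=B\odot C^{-1}$, since the right-hand side is $\min(A,C)-C$. Concretely, for $n=2$ and $M=x_{1,1}^2\odot x_{1,2}$ one has $e=x_{1,1}\odot x_{1,2}\oplus x_{2,1}\odot x_{2,2}$, $\operatorname{Sym}_2(M')=x_{1,1}\oplus x_{2,1}$, and $P=x_{1,1}\odot x_{1,2}\odot x_{2,1}\oplus x_{1,1}\odot x_{2,1}\odot x_{2,2}$; at the point $(x_{1,1},x_{1,2},x_{2,1},x_{2,2})=(0,10,1,0)$ the claimed quotient evaluates to $1-1=0$ while $\operatorname{Sym}_2(M)=\min(10,2)=2$. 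So the isolation of $\operatorname{Sym}_2(M)$ from the relation $e^a\odot\operatorname{Sym}_2(M')=\operatorname{Sym}_2(M)\oplus P$ is a genuine difficulty, not a formality: to complete the argument along these lines you (or the paper) need a substantively different justification --- for instance one of the two routes you gesture at, made precise --- rather than division by the cross-term sum.
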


We prove this by induction with respect to a special order on tropical monomials with nonnegative powers.  Note that any such monomial ${x_{1, 1}^{j_{1, 1}} \odot x_{1, 2}^{j_{1, 2}}\odot \ldots \odot x_{n, 1}^{j_{n, 1}} \odot x_{n, 2}^{j_{n, 2}}}$ may be represented by a 2n-tuple $(j_{1,1}, j_{1, 2}, j_{2, 1}, j_{2,2}, \ldots, j_{n, 1}, j_{n, 2}) \in \ZZ_{\geq 0}^{2n}$. The number of nonzero entries in such a $2n$-tuple is a measure of how `spread out' a monomial is:
\[
S(x_{1, 1}^{j_{1, 1}} \odot x_{1, 2}^{j_{1, 2}}\odot \ldots \odot x_{n, 1}^{j_{n, 1}} \odot x_{n, 2}^{j_{n, 2}}) = |\left\{ (i, k) \in \{1, 2, \ldots, n\}\times \{1, 2\} \,|\, j_{i, k} \neq 0 \right\}|.
\]

Let 
\[
m_1 = x_{1, 1}^{j_{1, 1}} \odot x_{1, 2}^{j_{1, 2}}\odot \ldots \odot x_{n, 1}^{j_{ n, 1}} \odot x_{n, 2}^{j_{n, 2}}
\]
and
\[
m_2 = x_{1, 1}^{s_{1, 1}} \odot x_{1, 2}^{s_{1, 2}}\odot \ldots \odot x_{n, 1}^{s_{n, 1}} \odot x_{n, 2}^{s_{n, 2}}.
\]

Notice that $\textrm{Deg}(m_1) = \sum_{i=1}^n (j_{i, 1}+j_{i, 2} )$ and $\textrm{Deg}(m_2) = \sum_{i=1}^n (s_{i, 1}+s_{i, 2} )$. Then we define order $>_S$ as follows:
\[
\begin{array}{lcll}
m_1 >_S m_2 & \Leftrightarrow &  \operatorname{Deg}(m_1) > \operatorname{Deg}(m_2) \\
 & \textrm{or} & \operatorname{Deg}(m_1) = \operatorname{Deg}(m_2), S(m_1) < S(m_2) \\
 & \textrm{or} & \operatorname{Deg}(m_1) = \operatorname{Deg}(m_2), S(m_1) = S(m_2), m_1 >_{lex} m_2. \\
\end{array}
\]
\begin{example}
Let
\[
m_1 = x_{1, 1}\odot x_{1, 2} \odot x_{2, 1},
\]
and
\[
m_2 = x_{1, 1}\odot x_{1, 2}^2.
\]
We have $\operatorname{Deg}(m_1)= \operatorname{Deg}(m_2) = 3$, $S(m_1) =3$, $S(m_2)=2$. Since $S(m_2)<S(m_1)$, we have $m_2 >_S m_1$.
\end{example}

\begin{proposition}
Relation $>_S$ is a well-ordering on the set of tropical monomials with nonnegative powers.
\end{proposition}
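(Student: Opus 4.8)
The plan is to verify the two defining properties of a well-ordering separately: that $>_S$ is a total (linear) order, and that it is well-founded, i.e. every nonempty set of monomials has a $>_S$-least element. Throughout I identify a monomial with its exponent tuple in $\ZZ_{\geq 0}^{2n}$, so that the three comparison keys are the degree $\operatorname{Deg}(m) \in \ZZ_{\geq 0}$, the spread $S(m) \in \{0, 1, \ldots, 2n\}$, and the lexicographic order $>_{lex}$ on $\ZZ_{\geq 0}^{2n}$.

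First I would establish that $>_S$ is a total order. The relation is a lexicographic combination of three total orders: the standard order on $\ZZ_{\geq 0}$ applied to $\operatorname{Deg}$, the reverse of the standard order on $\{0, \ldots, 2n\}$ applied to $S$, and $>_{lex}$ applied to the tuples. A lexicographic combination of total orders is again a total order, so it suffices to observe that the three keys together separate distinct monomials: if $m_1 \neq m_2$ have the same degree and the same spread, they are still distinct tuples and hence strictly comparable under $>_{lex}$. This yields trichotomy, while transitivity and irreflexivity follow from the corresponding properties of each key, so $>_S$ linearly orders the monomials.

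The core of the argument is well-foundedness, which I would prove by exhibiting a least element in an arbitrary nonempty set $T$ of monomials. Since $\{\operatorname{Deg}(m) \mid m \in T\}$ is a nonempty subset of $\ZZ_{\geq 0}$, it has a minimum $d_0$; set $T_0 = \{m \in T \mid \operatorname{Deg}(m) = d_0\}$. The crucial observation is that there are only finitely many monomials of any fixed degree in $2n$ variables with nonnegative exponents, so $T_0$ is a finite nonempty set, and a finite totally ordered set has a least element $m_0$. Finally, $m_0$ is $>_S$-least in all of $T$: any $m \in T \setminus T_0$ satisfies $\operatorname{Deg}(m) > d_0 = \operatorname{Deg}(m_0)$, hence $m >_S m_0$ by the first clause of the definition. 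Thus every nonempty set has a least element, and $>_S$ is a well-ordering.

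The only place where one must be attentive is that $>_S$ orders the spread $S$ in the reverse direction and then falls back on $>_{lex}$, so neither of these two keys is monotone in a manner that is obviously well-founded on its own. The point I would stress is that this causes no difficulty: once the minimal degree $d_0$ is fixed, the competition among $S$ and $>_{lex}$ takes place inside the finite set of degree-$d_0$ monomials, where any total order automatically has a least element. Equivalently, phrased in terms of descending chains, a strictly $>_S$-decreasing sequence can strictly decrease the degree only finitely often, after which it lives among finitely many monomials of a single degree and must terminate. This finiteness of each degree stratum is the entire content of the proof; everything else is formal.
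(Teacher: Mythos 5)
Your proof is correct. Note that the paper itself states this proposition without any proof, so there is no argument to compare against; your write-up supplies the missing justification. The structure you use is the standard one: totality follows because $>_S$ is a lexicographic combination of total orders on the three keys $\bigl(\operatorname{Deg}, S, >_{lex}\bigr)$ and distinct exponent tuples are always separated by $>_{lex}$; well-foundedness reduces, after minimizing the degree, to the finiteness of the set of monomials of a fixed degree in $2n$ variables with nonnegative exponents. You are also right to flag that the reversed ordering on the spread $S$ is harmless --- both because $S$ takes at most $2n+1$ values and, more decisively, because the entire comparison below the degree key happens inside a finite stratum. The only stylistic remark is that one could alternatively invoke the fact that $>_{lex}$ on $\ZZ_{\geq 0}^{2n}$ is itself a well-order and that a finite lexicographic product of well-orders (with the bounded key $2n - S$ in place of $S$) is again a well-order; your finite-stratum argument is more elementary and equally valid.
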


\begin{lemma}\label{multiStep1}
The 2-symmetrization of any tropical monomial with nonnegative powers can be written as a rational tropical function in the elementary symmetric polynomials.
\end{lemma}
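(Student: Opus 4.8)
The plan is to argue by well-founded induction on the order $>_S$, which the preceding proposition guarantees is a well-ordering of the tropical monomials with nonnegative powers. Writing $\operatorname{Sym}_2(m)$ for the $2$-symmetrization of a monomial $m$ with exponent matrix $J=(j_{i,k})$, I want to exhibit $\operatorname{Sym}_2(m)$ as a rational tropical function in the elementary $2$-symmetric polynomials, assuming the same for every monomial $m' <_S m$. For the base case I would take the monomials whose exponent matrix already has all entries in $\{0,1\}$, i.e. $S(m)=\operatorname{Deg}(m)$, together with the degree-zero monomial: for these $\operatorname{Sym}_2(m)$ is by definition one of the elementary $2$-symmetric polynomials $e_{[\dots]}$ (or a constant), so there is nothing to prove.

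For the inductive step, suppose some entry of $J$ is $\geq 2$. First I would produce the analogue of Lemma \ref{Step1}: choose the elementary polynomial $E$ attached to the $0/1$ support pattern of a block of rows of $J$, let $m'$ be $m$ with that pattern decremented (so $m'$ still has nonnegative powers), and expand $E \odot \operatorname{Sym}_2(m')$ using distributivity and the Frobenius identity \eqref{freshman}. The terms in which the $E$-factor and the $m'$-factor sit on the same rows reassemble into $\operatorname{Sym}_2(m)$, while the remaining (``cross'') terms are symmetrizations of strictly more spread-out monomials. The essential difference from the single-block case is that here these cross terms need not dominate, so instead of the clean equality of Lemma \ref{Step1} one obtains a fundamental relation
\[
E \odot \operatorname{Sym}_2(m') \;=\; \operatorname{Sym}_2(m) \;\oplus\; \bigoplus_{t} \operatorname{Sym}_2(m_t),
\]
in which $m'$ and each $m_t$ satisfy $m',m_t <_S m$ (smaller degree, or equal degree but strictly larger spread). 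By the inductive hypothesis the left-hand side and every $\operatorname{Sym}_2(m_t)$ are already rational functions of the elementary $2$-symmetric polynomials.

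It remains to solve this relation for the single summand $\operatorname{Sym}_2(m)$, and this is the step I expect to be the main obstacle. Because $\oplus=\min$ is not invertible, $\operatorname{Sym}_2(m)$ cannot be recovered within the tropical polynomial semiring — this is exactly the phenomenon behind the failure of finite generation for $r$-symmetric polynomials — so the rational structure must enter in an essential way. Writing $T=\bigoplus_t \operatorname{Sym}_2(m_t)$, I would use the identity $u\oplus v = u\odot v\odot \max(u,v)^{-1}$, with $\max$ itself a rational operation via $\max(u,v)=(u^{-1}\oplus v^{-1})^{-1}$, to reduce matters to expressing the relevant maximum. Concretely, one rebuilds $\operatorname{Sym}_2(m)$ from the order statistics of the two columns — the iterated minima and maxima of the variables in each block, all rational in the $e_{[\dots]}$ — together with a pairing detector (a comparison of two elementary polynomials, e.g. $e_{[(1,1)]}$ versus $e_{[(1,0)(0,1)]}$ when $n=2$) that selects the optimal assignment realizing $\operatorname{Sym}_2(m)$ at a given point. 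The difficulty is genuine: values such as $2\max+\min$ produced by the optimal assignment are never attained as a tropical sum of products of elementary polynomials, so the reconstruction forces both subtraction and a case analysis over the regions cut out by the pairing detector. Once $\operatorname{Sym}_2(m)$ is expressed this way the induction closes, and feeding the lemma into Theorem \ref{multimaxmintheorem} — through $\operatorname{Sym}_2(p)\odot q = p\odot \operatorname{Sym}_2(q)$ and the fact that a tropical sum of rational functions is rational — yields the theorem.
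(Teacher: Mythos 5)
Your proposal reproduces the paper's architecture faithfully up to the decisive step: the same well-ordering $>_S$, the same base case (exponent matrices with entries in $\{0,1\}$, whose $2$-symmetrizations are elementary by definition), and the same fundamental relation $e^a\odot\operatorname{Sym}_2(m')=\operatorname{Sym}_2(m)\oplus T$ obtained from Frobenius and distributivity, with $T$ a tropical sum of symmetrizations of strictly more spread-out monomials, so that $m'$ and all constituents of $T$ are $<_S m$. Where you and the paper part ways is in extracting $\operatorname{Sym}_2(m)$ from this relation. The paper does it in one line by cancellation, writing $\operatorname{Sym}_2(m)=e^a\odot\operatorname{Sym}_2(m')\odot T^{-1}$; your instinct that $\oplus=\min$ cannot simply be cancelled is sound, and indeed that identity cannot be taken at face value: already for $n=2$ and $m=x_{1,1}^2\odot x_{1,2}$ one has $e^a\odot\operatorname{Sym}_2(m')=\min(2x_{1,1}+x_{1,2},\,x_{1,1}+x_{1,2}+x_{2,1},\,x_{1,1}+x_{2,1}+x_{2,2},\,2x_{2,1}+x_{2,2})$ and $T=\min(x_{1,1}+x_{1,2}+x_{2,1},\,x_{1,1}+x_{2,1}+x_{2,2})$, and at the point $(1,0,0,0)$ the quotient equals $-1$ while $\operatorname{Sym}_2(m)=0$ (the two sides do not even have the same degree). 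So you have correctly located the crux of the lemma.

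Having located it, however, you do not cross it, and this is a genuine gap. ``Solve the relation for the single summand $\operatorname{Sym}_2(m)$'' is precisely what must be proved, and the order-statistics/pairing-detector paragraph is a description of a hoped-for construction rather than an argument: no formula is exhibited even in the smallest case, the ``detector'' is not defined for general $n$ and general exponent matrices, and no reason is given that the case analysis can be realized at all. That last point is where the difficulty concentrates: every element of $\operatorname{RTrop}$ is a continuous piecewise-linear function, so a ``case analysis over regions'' must be packaged into a single continuous expression --- in practice a clamped or median-type formula in which the jump between the values of the two candidate pairings is bounded by a fixed multiple of an available nonnegative comparison of elementary polynomials. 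In the example above such a formula does exist (one can recover $\operatorname{Sym}_2(x_{1,1}^2\odot x_{1,2})$ from $\max$, sums, and a comparison such as $e_{[(1,0)(0,1)]}\odot e_{[(0,0)(1,1)]}^{-1}$, after proving the requisite bounding inequality by hand), but establishing that such an inequality is available at every stage of the induction, for every monomial and every $n$, is the actual content of the lemma. Until that is supplied, your induction does not close.
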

\begin{proof}
We prove the statement by induction on the set of tropical monomials with nonnegative powers well-ordered by $>_S$. Monomial ${x_{1, 1}^{0} \odot x_{2, 1}^{0}\odot \ldots \odot x_{1, n}^{0} \odot x_{2, n}^{0}} = 0$ and as such is a rational function of 2-symmetric elementary polynomials. By definition the symmetrization of a monomial of degree 1 is a 2-symmetric elementary polynomial.

Let us suppose that this statement holds for all monomials $x_{1, 1}^{s_{1, 1}}\odot \ldots \odot x_{n, 2}^{s_{n, 2}}$ with
\[
x_{1, 1}^{s_{1, 1}}\odot \ldots \odot x_{n, 2}^{s_{n, 2}} <_S x_{1, 1}^{i_{1, 1}}\odot \ldots \odot x_{n, 2}^{i_{n, 2}}.
\]
We must show that it holds for $x_{1, 1}^{i_{1, 1}}\odot \ldots \odot x_{n, 2}^{i_{n, 2}}$ and then the statement will follow by induction. If $i_{1, 1}, \ldots, i_{n, 2} \leq 1$, then $\operatorname{Sym}_2 (x_{1, 1}^{i_{1, 1}}\odot \ldots \odot x_{n, 2}^{i_{n, 2}})$ is an elementary symmetric polynomial by definition.

Otherwise, let us suppose that exactly $i_{j_1^1, j_2^1}, i_{j_1^2, j^2_2}, \ldots, i_{j_1^k, j_2^k}$ among $i_{1, 1}, \ldots, i_{n, 2}$ are positive. Let ${a =\operatorname{min}(i_{j_1^1, j_2^1}, i_{j_1^2, j^2_2}, \ldots, i_{j_1^k, j_2^k})}$ and 
\[
e = \operatorname{Sym}_2(x_{j_1^1, j_2^1}\odot \ldots \odot x_{j_1^k, j_2^k}).
\]
We observe the following expression:
\[
e^a \odot \operatorname{Sym}_2(x_{j_1^1, j_2^1}^{i_{j_1^1, j_2^1}-a}\odot \ldots \odot x_{j_1^k, j_2^k}^{i_{j_1^k, j_2^k}-a}).
\]
Since the Frobenius identity holds in tropical arithmetic, this expression equals
\[
(\bigoplus_{\rho \in S_n}x_{\rho(j_1^1), j_2^1}^a \odot \ldots \odot x_{\rho(j_1^k), j_2^k}^a) \odot (\bigoplus_{\pi\in S_n} x_{\pi(j_1^1), j_2^1}^{i_{j_1^1, j_2^1}-a}\odot \ldots \odot x_{\pi(j_1^k), j_2^k}^{i_{j_1^k, j_2^k}-a}).
\]
By distributivity, we can rewrite it as 
\[
\bigoplus_{\rho \in S_n} \bigoplus_{\pi\in S_n} x_{\rho(j_1^1), j_2^1}^a \odot \ldots \odot x_{\rho(j_1^k), j_2^k}^a \odot  x_{\pi(j_1^1), j_2^1}^{i_{j_1^1, j_2^1}-a}\odot \ldots \odot x_{\pi(j_1^k), j_2^k}^{i_{j_1^k, j_2^k}-a}.
\]
By commutativity and Frobenius identity this equals
\[
\operatorname{Sym}_2(x_{j_1^1, j_2^1}^{i_{j_1^1, j_2^1}}\odot \ldots \odot x_{j_1^k, j_2^k}^{i_{j_1^k, j_2^k}} )\oplus \bigoplus_{\rho \in S_n} \bigoplus_{\pi \in S_n^\rho} x_{\rho(j_1^1), j_2^1}^a \odot \ldots \odot x_{\rho(j_1^k), j_2^k}^a \odot  x_{\pi(j_1^1), j_2^1}^{i_{j_1^1, j_2^1}-a}\odot \ldots \odot x_{\pi(j_1^k), j_2^k}^{i_{j_1^k, j_2^k}-a}.
\]
Here $S_n^\rho = \{ \pi \in S_n \,\mid\, r\in\{1, \ldots, k\} \textrm{ exists such that } (\pi(j_1^r), j_2^r) \notin \{(\rho(j_1^s), j_2^s)\}_{s=1}^k \}$.

We denote $x_{\rho(j_1^1), j_2^1}^a \odot \ldots \odot x_{\rho(j_1^k), j_2^k}^a \odot  x_{\pi(j_1^1), j_2^1}^{i_{j_1^1, j_2^1}-a}\odot \ldots \odot x_{\pi(j_1^k), j_2^k}^{i_{j_1^1, j_2^1}-a}$ by $p_{\rho, \pi}$ and write: 
\[
\operatorname{Sym}_2(x_{j_1^1, j_2^1}^{i_{j_1^1, j_2^1}}\odot \ldots \odot x_{j_1^k, j_2^k}^{i_{j_1^k, j_2^k}} ) = e^a \odot \operatorname{Sym}_2(x_{j_1^1, j_2^1}^{i_{j_1^1, j_2^1}-a}\odot \ldots \odot x_{j_1^k, j_2^k}^{i_{j_1^k, j_2^k}-a}) \odot ( \bigoplus_{\rho \in S_n} \bigoplus_{\pi \in S_n^\rho} p_{\rho, \pi})^{-1}.
\]
Since $\bigoplus_{\rho \in S_n} \bigoplus_{\pi \in S_n^\rho} p_{\rho, \pi}$ is symmetric and $\operatorname{Sym}_2$ is additive,  
\[
\bigoplus_{\rho \in S_n}\bigoplus_{\pi\in S_n^\rho} p_{\rho, \pi} =\operatorname{Sym}_2 (\bigoplus_{\rho \in S_n}\bigoplus_{\pi\in S_n^\rho} p_{\rho, \pi})=  \bigoplus_{\rho \in S_n} \bigoplus_{\pi\in S_n^\rho} \operatorname{Sym}_2(p_{\rho, \pi}).
\]
Observe that
\[
p_{\rho, \pi}
 <_S x_{j_1^1, j_2^1}^{i_{j_1^1, j_2^1}}\odot \ldots \odot x_{j_1^k, j_2^k}^{i_{j_1^k, j_2^k}}
\]
for every $\pi \in S_n^\rho$ and $\rho\in S_n$. This holds since $\textrm{Deg}\, p_{\rho, \pi}= \textrm{Deg} \,x_{j_1^1, j_2^1}^{i_{j_1^1, j_2^1}}\odot \ldots \odot x_{j_1^k, j_2^k}^{i_{j_1^k, j_2^k}}$ and because by definition $S_n^\rho$ contains $\pi$ for which such $r$ exists that $(\pi(j_1^r), j_2^r) \notin \{(\rho(j_1^s), j_2^s)\}_{s=1}^k$ and therefore $S(p_{\rho, \pi}) \geq S(x_{j_1^1, j_2^1}^{i_{j_1^1, j_2^1}}\odot \ldots \odot x_{j_1^k, j_2^k}^{i_{j_1^k, j_2^k}}) +1$.

Moreover, 
\[
x_{j_1^1, j_2^1}^{i_{j_1^1, j_2^1}-a}\odot \ldots \odot x_{j_1^k, j_2^k}^{i_{j_1^k, j_2^k}-a} <_S x_{j_1^1, j_2^1}^{i_{j_1^1, j_2^1}}\odot \ldots \odot x_{j_1^k, j_2^k}^{i_{j_1^k, j_2^k}},
 \]
 so by the inductive hypothesis $ \operatorname{Sym}_2 (x_{j_1^1, j_2^1}^{i_{j_1^1, j_2^1}-a}\odot \ldots \odot x_{j_1^k, j_2^k}^{i_{j_1^k, j_2^k}-a})$ can be written as a  rational tropical function in 2-symmetric elementary tropical polynomials.

By induction $ \operatorname{Sym}_2 (p_{\rho, \pi})$ are  rational tropical functions in {2-symmetric} elementary tropical polynomials. 

Since we can express $ \operatorname{Sym}_2 (x_{j_1^1, j_2^1}^{i_{j_1^1, j_2^1}}\odot \ldots \odot x_{j_1^k, j_2^k}^{i_{j_1^k, j_2^k}})$ as a rational tropical function in 2-symmetric elementary tropical polynomials, the proof is complete. 
\end{proof}

 \begin{proof}[Proof of Theorem \ref{multimaxmintheorem}]
Any rational tropical function $r$ may be written as
\[
r(x_{1, 1}, \ldots, x_{n, 2}) = p(x_{1, 1}, \ldots, x_{n, 2}) \odot q(x_{1, 1}, \ldots, x_{n, 2})^{-1},
\]
where $p$ and $q$ are 2-symmetric tropical polynomials. It follows from Lemma \ref{multiStep1} that 2-symmetrization of any tropical polynomial with nonnegative powers can be written as a rational tropical function in the elementary symmetric polynomials (using additivity of $\operatorname{Sym}_2$). Consequently, $\operatorname{Sym}_2(p(x_{1, 1}, \ldots, x_{n, 2}))$ and $\operatorname{Sym}_2(q(x_{1, 1}, \ldots, x_{n, 2}))$ are  rational tropical functions in elementary 2-symmetric tropical polynomials as is their tropical quotient  $r(x_{1, 1}, \ldots, x_{n, 2})$. 
 \end{proof}
 %
\section{Discussion}
There are many other semirings of interest to tropical algebraists, such as the symmetrized $(\max, +)$ semiring~\cite{gaubertmaxplus}, the extended tropical semiring~\cite{extendedsemiring}, the supertropical semiring~\cite{Izhakian20102222}, etc. It would be interesting to see to see if the theorems represented in this paper have equivalents in those settings. 

{\large Acknowledgement}

The authors thank Gregory Brumfiel, Iurie Boreico and Davorin Le\v{s}nik for numerous discussions. We also thank the editor Chuck Weibel and the referees of this journal for careful reading of the manuscript and good suggestions. 

\printbibliography
 \end{document}